\documentclass[a4paper,12pt,reqno]{amsart}
\usepackage[margin=1in]{geometry}
\usepackage{amsmath}
\usepackage{amsthm}
\usepackage{amssymb}
\usepackage{graphicx}
\usepackage{parskip}
\usepackage[pdftex]{hyperref}
\hypersetup{breaklinks=true,colorlinks=true,citecolor=green,urlcolor=red,linkcolor=blue,pdfpagemode=UseNone}

\theoremstyle{plain}
\newtheorem{theorem}{Theorem}[section]
\newtheorem{lemma}[theorem]{Lemma}

\DeclareMathOperator{\Var}{Var}
\DeclareMathOperator{\Aut}{Aut}
\DeclareMathOperator{\cl}{cl}
\DeclareMathOperator{\Cl}{Cl}

\begin{document}

\title{Diameter Thresholds of Random Cayley Graphs}
\author[D. Christofides]{Demetres Christofides$^{1}$}
\author[K. Markstr\"om]{Klas Markstr\"om$^{2}$}
\author[C. Savvidou]{Christina Savvidou$^{1}$}

\thanks{\noindent
\begin{minipage}[t]{0.95\textwidth}
$^{1}$INSPIRE Research Centre and UCLan Cyprus, 7080 Pyla, Larnaka, Cyprus.\\
\textit{Email addresses:}
\texttt{dchristofides@uclan.ac.uk},
\texttt{csavvidou@uclan.ac.uk}.\\
$^{2}$Department of Mathematics and Mathematical Statistics,
Ume{\aa} University, Ume{\aa}, Sweden.\\
\textit{Email address:}
\texttt{klas.markstrom@umu.se}. \\[4pt]
This work was supported by the project EXCELLENCE/0524/0534 implemented under the programme of social cohesion ``THALIA 2021-2027'' co-funded by the European Union, through Research and Innovation Foundation. 
\end{minipage}
}
\date{\today}

\begin{abstract}
Given a group $G$, the model $\mathcal{G}(G,p)$ denotes the probability space of all Cayley graphs of $G$ where each element of $G$ is included in the generating set independently at random with probability $p$.

In this article, we investigate the threshold probabilities for the diameter of random graphs in this model. Specifically, let $d_N = (1-\gamma)\sqrt{\frac{\log{N}}{2\log{\log{N}}}}$, where $\gamma \in (0,1)$ is any fixed real number. We show that for any $\varepsilon > 0$, any family of groups $G_k$ of order $N_k$ for which $N_k \to \infty$, and any integer $2 \leqslant d\leqslant d_{N_k}$, a graph $\Gamma_k \in \mathcal{G}(G_k,p)$ with high probability has diameter at most $d$ if $p \geqslant \sqrt[d]{(1+\varepsilon) d! \frac{\log{N_k}}{N_k^{d-1}}}$, and diameter greater than $d$ if $p \leqslant \sqrt[d]{\frac{1-\varepsilon}{2^d} \frac{\log{N_k}}{N_k^{d-1}}}$. 

Up to a constant factor, these thresholds are similar to those for the usual Erd\H{o}s-R\'enyi random graphs. However, the precise thresholds in our model depend on the underlying family of groups. We provide specific examples of group families demonstrating that both of our bounds are best possible.
\end{abstract}

\maketitle
\section{Introduction}

Let us begin by recalling that given a group $G$ and a subset $S$ of $G$, the (undirected) Cayley graph $\Gamma = \Gamma(G;S)$ of $G$ with respect to $S$ has the elements of $G$ as its vertex set and has an edge between $g$ and $h$ if and only if $g^{-1}h \in S$ or $h^{-1}g \in S$. In other words, there is an edge between $g$ and $h$ if and only if there is an $s \in S$ with $gs=h$ or with $hs = g$. We ignore any loops or multiple edges. In particular, whether $1 \in S$ or not is immaterial. Observe, for example, that $\Gamma$ is connected if and only if the set $S$ generates the group $G$. Throughout the paper, we will often refer to the set $S$ as the generating set of the graph $\Gamma$ regardless of whether it is a generating set for the group $G$ or not. 

The model $\mathcal{G}(G,p)$ is the probability space of all graphs $\Gamma(G;S)$ in which every element of $G$ is assigned to the set $S$ independently at random with probability $p$. This model has many similarities with the model $\mathcal{G}(n,p)$, which is the probability space of all graphs with vertex set $\{1,2,\ldots,n\}$ in which every edge appears independently with probability $p$. We refer the reader to~\cite{CM-Latin} for some of these similarities. There are however many differences between these two models. An obvious difference is that every graph in $\mathcal{G}(G,p)$ is regular while with high probability this is not the case in the model $\mathcal{G}(n,p)$ (unless in the trivial cases in which $p$ is either so large or so small that with high probability forces the graph to be complete or empty respectively). This difference also motivates the comparison of the model $\mathcal{G}(G,p)$ with the model $\mathcal{G}_{n,r}$, the probability space of all $r$-regular graphs on $\{1,2,\ldots,n\}$ taken with the uniform measure. These two models still have significant differences. For example, every graph $\Gamma \in \mathcal{G}(G,p)$ is not only regular, but in fact it has a high degree of symmetry. More specifically, every element $g$ of $G$ defines an automorphism of $\Gamma$ by left multiplication and so $G$ is a subgroup of $\Aut(\Gamma)$. On the other hand, it is known that for every $3 \leqslant r \leqslant n-4$, graphs in $\mathcal{G}_{n,r}$ have with high probability a trivial automorphism group~\cite{KSV}. Given the success of random graphs in settling many graph theory questions and the fact that random Cayley graphs have some important properties not shared by other random graph models, we see that their study is highly desirable. 

In this paper we study the diameter of random Cayley graphs. In particular, for every fixed integer $d \geqslant 2$ we will be concerned with the range of $p$ for which the diameter of the random Cayley graph changes from greater than $d$ to at most $d$. It is well-known (see e.g.~\cite{Bollobas}) that for any $\varepsilon > 0$, a graph from $\mathcal{G}(n,p)$ with high probability has diameter greater than $d$ if $p \leqslant \sqrt[d]{\frac{(2 - \varepsilon) \log{n}}{n^{d-1}}}$ and diameter at most $d$ if $p \geqslant \sqrt[d]{\frac{(2 + \varepsilon) \log{n}}{n^{d-1}}}$. For the case of random regular graphs, exactly the same asymptotic thresholds hold. This follows from a result of Gao, Isaev and McKay~\cite{GIM} which settled the sandwich conjecture of Kim and Vu~\cite{KV} in a suitable range of the parameters. It is worth noting that the conjecture has now been settled in full by Behague, Il'kovic and Montgomery~\cite{BIM}. While this sandwiching establishes a strong universality between random regular graphs and $\mathcal{G}(n,p)$, our results will highlight that such strict universality breaks down for random Cayley graphs, where local algebraic dependencies alter the precise threshold constants.

There are several existing results which directly or indirectly bound the diameter of a random Cayley graph.  In \cite{CM-Expansion} Christofides and Markstr\"om showed that random Cayley graphs with a logarithmic number of generators are with high probability good expander graphs. As for all expanders, this implies that the diameter is at most logarithmic in the number of vertices.  With fewer generators, some groups do not get connected Cayley graphs so this result is to leading order sharp. This expansion property also implies that these Cayley graphs are with high probability Hamiltonian \cite{KS-Hamiltonian,BDMP-Cayley-Lovasz}.  For Cayley graphs with degree above $n^{1-c}$, for some $c>0$, this holds for all Cayley graphs, and for degree above $\alpha n$, for any $\alpha>0$, the Hamiltonian cycle can be found in polynomial time \cite{Christofides-Hladky-Mathe-Hamilton}. Instead, looking at low diameters, \cite{CM-diam} determined the threshold(s) for  diameter 2. For general random Cayley graphs, the existing results thus leave the threshold behaviour for diameters from 3 up to a logarithmic order open.  Here we should also mention the following conjecture of Babai:  There exists a constant $c>0$ such that for any non-abelian simple group $G$, and any generating set $S$ such that the Cayley graph is connected, the diameter is at most $(\log |G|)^c$. Several recent papers have shown that this holds for different group families, e.g.  \cite{Breuillard-Green-Tao-Approximate-Subgroups,Helfgott-Seress-Permutation-Diameter,Garonzi-Halasi-Somlai-Classical-Transvection,Halasi-SLnp-Transvection,Pham-Zhang-Logarithmic-Diameter}. A result of  particular interest is given in \cite{Eberhard-Jezernik-Babai-Random-Generators} where it is shown that, for certain groups, if a generating set makes  the Cayley graph connected  and we add a constant number of random generators then the new generating set satisfies the bound from Babai's conjecture.  This type of random perturbation result for diameter would be interesting to pursue for lower diameters as well.

For some group families specialized diameter and distance results have been found. Abelian groups have been studied both for their structural simplicity and close connection to number theory.  In \cite{Marklof-Frobenius,Marklof-Strombergsson-Circulant-Diameters} it was shown that for a given number of generators, the diameters converge in distribution for certain families of abelian groups. In \cite{Hermon-OleskerTaylor-Abelian-Cayley} this was extended to all abelian groups, and that the typical distance between two vertices also becomes concentrated around a specific value.  Going one step beyond the abelian case, similar distribution results were shown for nilpotent groups in \cite{ElBaz-Pagano-Nilpotent-Diameters}. In  \cite{Sardari-Ramanujan-Cayley-Diameters} the diameters of  certain algebraic Ramanujan graphs were studied and compared to those for random 6-regular Cayley graphs of $\operatorname{PSL}_2(\mathbb Z/(q\mathbb Z))$. For the latter, the diameter was conjectured to  asymptotically be $\log_5(N)$. 

Note that when speaking about random Cayley graphs, the group $G$ and thus its order is fixed. So strictly speaking, it does not really make sense to speak of events occurring with high probability. Instead, we have to consider a family of groups $G_k$ for which their orders tend to infinity. Motivated by the above results, it is natural to conjecture that for any $\varepsilon > 0$, if $(G_k)$ is a family of groups having order $n_k$ with $n_k \to \infty$, then there is a constant $c$ such that if $\Gamma \in \mathcal{G}(G_k,p)$ then with high probability $\Gamma$ has diameter greater than $d$ if $p \leqslant  \sqrt[d]{(c - \varepsilon)\frac{\log{n_k}}{n_k^{d-1}}}$ and diameter at most $d$ if $p \geqslant  \sqrt[d]{(c + \varepsilon)\frac{\log{n_k}}{n_k^{d-1}}}$. It turns out that this naive conjecture is wrong for the following reason: We will see that the conjecture is true for several natural families of groups. However, different families of groups can give rise to different constants $c$. In particular, if we take two families $(G_k)$ and $(H_k)$ for which the conjecture is true but with different values of $c$, then if we interlace these families into a new one we get a family for which this naive conjecture is false. This suggests that the right conjecture to consider is the following: For any $\varepsilon > 0$, if $(G_k)$ is a family of groups having order $n_k$ with $n_k \to \infty$, then there are constants $c_1,c_2$ such that if $\Gamma \in \mathcal{G}(G_k,p)$ then with high probability $\Gamma$ has diameter greater than $d$ if $p \leqslant  \sqrt[d]{(c_1 - \varepsilon)\frac{\log{n_k}}{n_k^{d-1}}}$ and diameter at most $d$ if $p \geqslant  \sqrt[d]{(c_2 + \varepsilon)\frac{\log{n_k}}{n_k^{d-1}}}$. Our aim in this paper is to prove this conjecture. Moreover, we will show that the values we obtain for $c_1$ and $c_2$ are best possible. In particular, the following results extend the previous results of Christofides and Markstr\"om~\cite{CM-diam} for the case $d=2$ to all constant $d$, and much of the possible range for growing $d$.

To state our results rigorously without explicitly carrying the subscript $k$ for group families, all statements involving the phrase ``with high probability'' should be understood as asserting that the respective error probabilities tend to $0$ as $N \to \infty$, uniformly over the class of groups under consideration. 

In fact, our results are going to work for growing functions of the diameter as well. In particular, for groups of size $N$, the following results are valid with diameters of size at most $d_N$, where
\[ d_N = (1-\gamma)\sqrt{\frac{\log{N}}{2\log{\log{N}}}},\]
and $\gamma \in (0,1)$ is any fixed real number. With this choice of $d_N$, we point out for later use, that
\[ 4d_N^2\log{d_N} \sim (1-\gamma)^2\log{N}.\]

\begin{theorem}\label{T:<=D}
    Let $\varepsilon > 0$, $d,N \in \mathbb{N}$ with $2 \leqslant d \leqslant d_N$, and let $G$ be any group of order~$N$. If $\Gamma \in \mathcal{G}(G,p)$, where $p \geqslant \sqrt[d]{d!(1 + \varepsilon)\frac{\log{N}}{N^{d-1}}}$, then with high probability, the diameter of $\Gamma$ is at most $d$.
\end{theorem}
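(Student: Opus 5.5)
By vertex-transitivity (left multiplication by $G$ acts by automorphisms of $\Gamma$), the diameter of $\Gamma$ is at most $d$ if and only if every $g \in G$ can be written as a product $s_1^{\pm1}s_2^{\pm1}\cdots s_j^{\pm1}$ with $j\le d$ and $s_i\in S$. It therefore suffices to show that for each fixed $g\neq 1$, the probability that $g$ is \emph{not} representable as $s_1s_2\cdots s_d$ with $s_i\in S$ is at most $o(1/N)$. A union bound over the $N-1$ nontrivial elements then finishes the proof. We restrict to positive words of length exactly $d$, which is legitimate since this only makes the bad event larger. One could even fix the first letter and use only $d-1$ free choices, but the $d!$ in the threshold suggests counting unordered sets $\{s_1,\dots,s_d\}$ of \emph{distinct} elements.

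For fixed $g$, we consider the family $\mathcal{A}_g$ of $d$-sets $\{x_1,\dots,x_d\}$ of distinct elements such that some ordering satisfies $x_1\cdots x_d=g$. We take sequences $(x_1,\dots,x_{d-1})$ of distinct elements, set $x_d=(x_1\cdots x_{d-1})^{-1}g$, and discard those with $x_d$ coinciding with an earlier $x_i$ or with a degenerate collision. This gives $(1-o(1))N^{d-1}$ ordered tuples, and since each set arises from at most $d!$ orderings, $|\mathcal{A}_g|\ge (1-o(1))N^{d-1}/d!$. The indicator that some $A\in\mathcal{A}_g$ lies in $S$ is an increasing event determined by products of independent Bernoulli variables, so Janson's inequality applies:
\[\mathbb{P}(\text{no } A\subseteq S)\le \exp(-\mu+\Delta/2),\qquad \mu=|\mathcal{A}_g|p^d\ge (1+\varepsilon-o(1))\log N.\]
Hence $e^{-\mu}\le N^{-1-\varepsilon/2}$, and we need $\Delta=o(\log N)$, or more precisely $\Delta\le \varepsilon\mu/4$. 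Here $\Delta$ sums $p^{|A\cup B|}$ over ordered pairs of distinct overlapping sets.

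The main obstacle is bounding $\Delta$ uniformly over all groups and all $d\le d_N$. For $|A\cap B|=t$ with $1\le t\le d-1$, a count is needed: fixing $A$ (at most $N^{d-1}$ choices), choosing which $t$ elements are shared and their positions in an ordering of $B$ (at most $\binom{d}{t}d!$ ways, absorbed into something like $d^{2t}$), the remaining $d-t$ elements of $B$ satisfy one product constraint, leaving at most $N^{d-t-1}$ choices. This gives
\[\Delta\le \sum_{t=1}^{d-1} N^{d-1}\,d^{2d}\,N^{d-t-1}p^{2d-t}\approx \mu^2\sum_t d^{2d}N^{-t}p^{-t}.\]
Since $Np\approx N^{1/d}(\log N)^{1/d}$, the term $t=1$ is roughly $\mu^2 d^{2d}N^{-1/d}$. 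This is $o(1)$ precisely when $2d\log d<\tfrac1d\log N$, i.e.\ $2d^2\log d<\log N$, which is where the choice of $d_N$ with $4d_N^2\log d_N\sim(1-\gamma)^2\log N$ enters (with slack). Larger $t$ decay geometrically. The delicate point is the case $t=d-1$ together with group-theoretic coincidences, where $B$ is determined by $A$ up to order. I would handle this by counting these pairs directly: there are at most $d^{O(d)}$ such $B$ per $A$, contributing $\mu\, d^{O(d)}p$, which is negligible. I would also double-check the $(1-o(1))$ loss from degenerate tuples uniformly in $d$, since collisions number at most $d^2N^{d-2}$.
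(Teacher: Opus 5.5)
\textbf{There is a genuine gap at $d=2$.} Your overall strategy is the paper's: Janson applied to $d$-sets, the overlap count giving $\Delta=o(1)$, and a union bound over $g\neq 1$. For $d\geqslant 3$, restricting to positive words is harmless. The theorem, however, includes $d=2$, and there your collision bound (collisions number at most $d^2N^{d-2}$) fails.

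\emph{Why it fails.} With $x_2=x_1^{-1}g$, the coincidence $x_2=x_1$ means $x_1^2=g$. A non-identity element can have a constant fraction of $N$ square roots. For $d\geqslant 3$ the analogous coincidence is $x_1\cdots x_{d-2}x_{d-1}^2=g$, and you can solve for $x_1$, so it really is $O(N^{d-2})$.

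\emph{Counterexample.} Take $G=Q_8\times\mathbb{Z}_2^n$ and $g=(-1,0)$. Writing $x_1=(q,v)$ gives $x_2=(-q^{-1},v)$, and this equals $x_1$ whenever $q\in\{\pm i,\pm j,\pm k\}$. So $\mathcal{A}_g$ consists only of the $N/8$ pairwise disjoint sets $\{(1,v),(-1,v)\}$. At $p^2=2(1+\varepsilon)\log N/N$ this gives
\[\Pr(\text{no }A\in\mathcal{A}_g\text{ lies in }S)=(1-p^2)^{N/8}=N^{-(1+\varepsilon)/4+o(1)},\]
which is not $o(1/N)$. So the per-element bound your union bound needs is false for your family. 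The element $g$ is in fact reached with overwhelming probability, for instance as $s\cdot s$ with $s$ one of its $3N/4$ square roots, or as $s^{-1}t$. Those are exactly the representations your family throws away.

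\emph{The fix.} Use the paper's sign pattern from (O3): count words $x_1\cdots x_{d-2}x_{d-1}^{-1}x_d=g$, with an inverse on the penultimate letter. Then $x_d=x_{d-1}x_{d-2}^{-1}\cdots x_1^{-1}g$, and the coincidence $x_d=x_{d-1}$ becomes $x_1\cdots x_{d-2}=g$ (for $d=2$, simply $g=1$). This costs a full degree of freedom and involves no square roots. The other coincidences are handled as you describe, giving $(1-o(1))N^{d-1}/d!$ sets uniformly in $g\neq 1$. Your overlap count for $\Delta$ carries over unchanged to a single fixed sign pattern. Alternatively, keep positive words and add the singletons $\{s\}$ with $s^2=g$ to the Janson family, but the sign change is simpler.

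\emph{A minor point on $\Delta$.} $N^{2d-2}p^{2d}\approx (d!)^2\mu^2$, not $\mu^2$. With your $d^{2d}$ factor, the $t=1$ term therefore needs roughly $4d^2\log d<\log N$ rather than $2d^2\log d<\log N$. This still holds, since $4d_N^2\log d_N\sim(1-\gamma)^2\log N$.
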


In the other direction, we have the following bound:

\begin{theorem}\label{T:D>=3}
    Let $\varepsilon > 0$, $d,N \in \mathbb{N}$ with $2 \leqslant d \leqslant d_N$, and let $G$ be any group of order~$N$. If $\Gamma \in \mathcal{G}(G,p)$, where $p \leqslant \sqrt[d]{\frac{1 - \varepsilon}{2^d} \cdot \frac{\log{N}}{N^{d-1}}}$, then with high probability, the diameter of $\Gamma$ is greater than $d$.
\end{theorem}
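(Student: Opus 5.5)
Since $\Gamma$ is vertex-transitive (left multiplication by $G$ acts by automorphisms), the diameter exceeds $d$ if and only if some $g \in G$ is at distance greater than $d$ from the identity. Such a $g$ exists if and only if $g \notin (S\cup S^{-1}\cup\{1\})^d$. So the plan is to show that, for $p$ below the stated bound, with high probability some element of $G$ cannot be written as a product of at most $d$ elements of $S \cup S^{-1}$.

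We use the second moment method. For $g\in G$ let $X_g$ be the indicator that $g$ is \emph{not} a product $s_1^{\pm1}\cdots s_j^{\pm1}$ with $j\le d$ and $s_i\in S$, and set $X=\sum_g X_g$. The goal is $\mathbb{E}X\to\infty$ and $\Var X=o((\mathbb{E}X)^2)$.

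For a single $g$, write $N_j = |\{(x_1,\dots,x_j)\in G^j : x_1\cdots x_j = g\}|$. Then the number of ordered words of length $d$ in the $2$ choices of sign per letter representing $g$ is at most $2^d N^{d-1}$. Each such word is witnessed by the presence of at most $d$ specified elements in $S$. Words with repeated letters (as elements of $G$ up to inversion) have probability $p^{j'}$ with $j'<d$, but there are only $O(d^2 2^d N^{d-2})$ of them. Since $p^{d-1}N^{d-2}\cdot d^2 2^d = o(1)$ for $d\le d_N$, their contribution is negligible; shorter words are handled the same way.

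To lower-bound $\Pr[X_g=1]$ we use Janson's inequality on the family of events $A_w=\{\text{all letters of } w \in S\}$, over words $w$ of length at most $d$ with distinct letters. This gives $\mu=\sum_w \Pr(A_w)\le (1+o(1))2^d N^{d-1}p^d \le (1-\varepsilon+o(1))\log N$, where we also divide by $d!$-type symmetries only to help, never to hurt, since we are upper-bounding $\mu$. Janson then yields $\Pr[X_g=1]\ge e^{-\mu}\cdot$ (correction) $\ge N^{-1+\varepsilon/2}$, provided the overlap sum $\Delta=\sum_{w\sim w'}\Pr(A_w\cap A_{w'})$ is $o(1)$. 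Alternatively, the Harris inequality gives $\Pr[X_g=1]\ge\prod_w(1-\Pr A_w)\approx e^{-\mu}$ directly, without needing $\Delta$. Either way $\mathbb{E}X\ge N^{\varepsilon/2}\to\infty$.

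For the variance we need $\Pr[X_g=X_h=1]\le(1+o(1))\Pr[X_g=1]\Pr[X_h=1]$ for all but $o(N^2)$ pairs. Apply Janson to the union of the two word families, using Harris for the lower bound and Janson's upper bound $\Pr[\text{no } A_w]\le e^{-\mu+\Delta}$. We then need the cross term $\Delta$ (pairs of words sharing at least one letter, one representing $g$, one $h$) and the within-family $\Delta$ to be $o(1)$. Two words sharing $t\ge1$ letters have joint probability $p^{2d-t}$. Once $g$, $h$ and the word for $g$ are fixed, the number of words for $h$ sharing $t$ given positions is about $\binom{d}{t}^2 t!\,2^d N^{d-1-t}$ (this requires care when $t=d$, which forces $h$ to be determined by $g$). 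Summing gives terms like $\mu^2 (d^2/ (pN))^t$ and $\mu\,(\text{stuff})$, which are $o(1)$ because $pN\ge N^{1/d}$ and $d^{2d}=N^{o(1)}$ exactly under $d\le d_N$. Note $4d^2\log d\sim(1-\gamma)^2\log N$ gives $d^{4d}\ll N^{1/d}$-type room. The exceptional pairs ($h$ determined by $g$ via full overlap, e.g.\ $h$ a reversed or inverted product) number $O(d!\,2^d N)=o(N^2)$, and we bound those trivially.

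The main obstacle is this overlap bookkeeping. The algebraic structure of $G$ (inverses, reversals, coincidences such as $s_is_j=s_js_i$) makes counting the shared-letter words delicate, and the growing $d$ forces all combinatorial factors like $d!2^dd^{2t}$ to be absorbed using $d\le d_N$. I would first do the crude count by fixing shared positions, then check that the exponent condition $4d^2\log d<(1-\gamma)^2\log N$ is exactly what makes these bounds close. Chebyshev then gives $X>0$ with high probability, that is, diameter greater than $d$.
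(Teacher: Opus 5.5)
Your architecture matches the paper's: second moment, Harris/Kleitman for the lower bound on $\Pr[X_g=1]$, Janson on the union of the two $d$-edge families, and overlaps counted by fixing shared positions. Your cross-term count is also fine; it is exactly the count of Lemma~\ref{overlap} and gives $o(1)$ directly.

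\textbf{Main gap: not every $g$ behaves uniformly.} You claim that, for each $g$, shorter and repeated-letter words contribute negligibly. This is false. A word with $j'$ distinct letters has probability $p^{j'}$, not $p^{d-1}$. For a particular $g$, the number of words in which every letter is repeated can be of order $N^{j'}$. For example, in $G=Q_8\times\mathbb{Z}_2^n$ the element $z=(-1,0)$ is the square of $3N/4$ elements. Hence
\[
\Pr[X_z=1]\leqslant(1-p)^{3N/4}\leqslant e^{-3pN/4}.
\]
At the threshold value of $p$ (enough by monotonicity) this is $e^{-\Omega(N^{1/d})}$, far below $N^{-1+\varepsilon/2}$. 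For such $g$ the estimate $\Pr[X_g=1]\geqslant(1+o(1))e^{-p^de_d(g)}$ fails, and your variance comparison needs exactly that estimate.

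The missing step is an averaging argument. By (O1), each $k$-set is a $k$-edge of at most $(2k)^{d+1}$ hypergraphs $\Gamma_x$, so there are at most $(2k)^{d+1}N^k$ $k$-edges in total. Therefore all but $d(2d)^{d+1}N^{1-\delta}$ elements $x$ satisfy $e_k(x)\leqslant N^{k-1+\delta}$ for every $k\leqslant d$. For these elements $\prod_{k<d}(1-p^k)^{e_k(x)}\to 1$. You must define $X$ as a count over these good elements only; this is the first half of Lemma~\ref{Independent set I}.

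\textbf{Second gap: the variance criterion.} Knowing $\Pr[X_g=X_h=1]\leqslant(1+o(1))\Pr[X_g=1]\Pr[X_h=1]$ for all but $o(N^2)$ pairs is not enough. $\mathbb{E}X$ may be as small as $N^{\varepsilon+o(1)}$. Then, say, $N^{3/2}$ exceptional pairs, each bounded trivially by $\Pr[X_g=1]\approx N^{-1+\varepsilon}$, contribute about $N^{1/2+\varepsilon}$. This exceeds $(\mathbb{E}X)^2$ when $\varepsilon<1/2$. What you need is that each good $g$ has only $o(\mathbb{E}X)$ exceptional partners.

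Your justification that $h$ is ``determined by $g$'' is also wrong, since $h$ depends on the letters. For instance, cyclically shifting $s_1\cdots s_d=g$ gives $h=s_1^{-1}gs_1$. You can repair this by double counting. A $d$-set is a $d$-edge of at most $d!2^d$ of the hypergraphs $\Gamma_h$, so
\[
\sum_h e_d(g\wedge h)\leqslant d!2^d e_d(g)\leqslant d!4^dN^{d-1}.
\]
Hence at most $d!4^dN^{\delta}=N^{\delta+o(1)}$ elements $h$ have $e_d(g\wedge h)\geqslant N^{d-1-\delta}$. This is $o(\mathbb{E}X)$ once $\delta<\varepsilon$. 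For every other $h$ one has $p^de_d(g\wedge h)=o(1)$. The paper avoids exceptional pairs altogether by working inside an independent set $I$ of the $\delta$-dependency graph $H$, with $|I|\geqslant N^{1-3\delta}$.

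\textbf{Two smaller points.}
\begin{itemize}
\item Janson's inequality only gives upper bounds, so your lower bound on $\Pr[X_g=1]$ must come from Harris/Kleitman, not from Janson.
\item The events should be indexed by letter sets (edges of $\Gamma_g$), not by words. Otherwise, in an abelian group, the $d!$ orderings of one set become distinct, fully overlapping events, and $\Delta$ is no longer $o(1)$. With sets, your case $t=d$ is just the common-edge term $p^de_d(g\wedge h)$ in the exponent.
\end{itemize}
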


For abelian groups, we can improve this result to the following:

\begin{theorem}\label{T:D>=3-Abelian}
    Let $\varepsilon > 0$, $d,N \in \mathbb{N}$ with $2 \leqslant d \leqslant d_N$, and let $G$ be any abelian group of order~$N$. If $\Gamma \in \mathcal{G}(G,p)$, where $p \leqslant \sqrt[d]{\frac{d!(1 - \varepsilon)}{2^d} \cdot \frac{\log{N}}{N^{d-1}}}$, then with high probability, the diameter of $\Gamma$ is greater than $d$.
\end{theorem}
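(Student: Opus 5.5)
By vertex-transitivity ($G$ acts on $\Gamma$ by left multiplication), $\Gamma$ has diameter at most $d$ if and only if every $g \in G$ can be written as $g = \pm s_1 \pm s_2 \pm \cdots \pm s_j$ with $j \leqslant d$ and $s_i \in S$, written additively since $G$ is abelian. Call $g$ \emph{bad} if it has no such representation, and let $X$ be the number of bad elements. The plan is a second moment argument: show $\mathbb{E}X \to \infty$ and $\Var X = o((\mathbb{E}X)^2)$, so that $X>0$ with high probability.

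\textbf{Step 1: the first moment via Harris.} For fixed $g$, the event that $g$ is bad is the intersection of the decreasing events $A_R=\{R \not\subseteq S\}$, one for each \emph{representation} $R$. A representation is a set $\{s_1,\dots,s_j\}$ of elements with signs, summing to $g$. Because $G$ is abelian, the order of the summands is irrelevant. So for each choice of signs and of $s_1,\dots,s_{d-1}$ the last element is determined, and unordered choices are counted once. Hence the number of representations using $d$ distinct elements is at most $2^d N^{d-1}/d!$. This $d!$ saving, which is not available for nonabelian groups, is exactly what improves the constant over Theorem~\ref{T:D>=3}. Representations with fewer than $d$ terms or with repeated elements contribute $O(2^d d^2 N^{d-2} p^{d-1})$ in total. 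This is $o(1)$ in the relevant sense, since $Np^{}$ is polynomially large in $N$. Elements of order $2$, where $s=-s$, only reduce the count.

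Let $\mu_g=\sum_R p^{|R|}$. Then $\mu_g \leqslant (1-\varepsilon)\log N + o(1)$. By Harris's inequality and $1-x \geqslant e^{-x-x^2}$ we get
\[ \mathbb{P}(g \text{ bad}) \geqslant \prod_R (1-p^{|R|}) \geqslant e^{-\mu_g - o(1)} \geqslant N^{-(1-\varepsilon)-o(1)}. \]
Therefore $\mathbb{E}X \geqslant N^{\varepsilon - o(1)} \to \infty$.

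\textbf{Step 2: the second moment via Janson.} We need $\mathbb{P}(g,h \text{ both bad}) \leqslant (1+o(1))\,\mathbb{P}(g\text{ bad})\,\mathbb{P}(h\text{ bad})$ for all but $o(N^2)$ pairs, with a crude bound for the exceptional pairs. Janson's inequality gives
\[ \mathbb{P}(g,h\text{ bad}) \leqslant \exp(-\mu_g-\mu_h+\Delta_{g,h}), \]
where $\Delta_{g,h}$ sums $p^{|R\cup R'|}$ over pairs of distinct representations $R,R'$ (each of $g$ or $h$) that share at least one element. Compared with the Harris lower bound from Step 1, it suffices to show $\Delta_{g,h}=o(1)$ for typical pairs.

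Consider two representations sharing $k$ elements, with $1\leqslant k<d$, whether of the same target or of $g$ and $h$. Fixing the first representation leaves about $d-k-1$ free elements for the second. So these terms contribute at most
\[ C_d\, N^{2d-k-2}p^{2d-k} \approx C_d\,\mu^2 (Np)^{-k}, \]
where $C_d \leqslant (2^d d!)^{2}d^{O(d)}$ counts sign patterns and the choice of shared positions. Since $Np \geqslant N^{1/d}$ up to logarithmic factors, each term is at most $C_d (\log N)^2 N^{-k/d}$.

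\textbf{Main obstacle: controlling $C_d$ for growing $d$.} The hardest step is to check that $C_d N^{-1/d}=o(1)$, i.e.\ $\exp(O(d\log d)) \ll N^{1/d}$. This is precisely where the hypothesis $d\leqslant d_N$ enters, through $4d_N^2\log d_N \sim (1-\gamma)^2\log N$. One must also bound how many pairs, and how many choices of $k$, can occur. The constant in the $O(d\log d)$ has to be tracked carefully to stay within the $(1-\gamma)^2$ margin; I would first try to bound the number of overlap patterns by $\binom{d}{k}^2 k!\,2^{2d}$.

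\textbf{Exceptional pairs and conclusion.} The exceptional pairs are those with $k=d$ possible across $g$ and $h$ ($h=\pm g$), and those where $h-g$ or $h+g$ has an anomalously short representation. There are only $O(N)$ such pairs. For them I use the trivial bound $\mathbb{P}(g,h\text{ bad})\leqslant \mathbb{P}(g\text{ bad})$, so they contribute $O(\mathbb{E}X)=o((\mathbb{E}X)^2)$ to the second moment. Chebyshev's inequality then gives $X>0$ with high probability, so the diameter of $\Gamma$ is greater than $d$.
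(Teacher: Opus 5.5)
Your skeleton matches the paper's: a Harris/Kleitman lower bound, a Janson upper bound and Chebyshev, with the abelian $d!$ saving of (O2a) and the overlap count of Lemma~\ref{overlap}. The growth of $C_d$ that you single out as the main obstacle is routine; it is the same computation using $4d_N^2\log d_N\sim(1-\gamma)^2\log N$. The genuine gap is that you treat all vertices and all pairs as uniform, and this fails in abelian groups with large $2$-torsion.

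\textbf{Step 1.} You assert that for \emph{every} $g$ the representations with fewer than $d$ elements contribute $o(1)$ to $\mu_g$. Take $G=\mathbb{Z}_4\times\mathbb{Z}_2^{n}$ and $g=(2,0)$. Then $2h=g$ for all $N/2$ elements $h$ with odd first coordinate, so each $\{h\}$ is a representation. Hence $\mu_g\geqslant Np/2\to\infty$ at the threshold, and $g$ is within distance $2$ of the identity with high probability.

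Only an averaging statement holds. By (O1) each $k$-set represents at most $(2k)^{d+1}$ targets, so at most $d(2d)^{d+1}N^{1-\delta}$ elements have more than $N^{k-1+\delta}$ representations of size $k$ for some $k$. You must then let $X$ count only bad elements among the remaining typical ones. It is not enough that $\mathbb{E}X$ stays large: Step~2 compares Janson against $\Pr(g\text{ bad})\geqslant(1+o(1))e^{-p^de_d(g)}$, and that inequality fails for atypical $g$. For the same reason, apply Janson only to the $d$-element representations (avoiding all representations implies avoiding these). Your overlap count says nothing about pairs involving short representations.

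\textbf{Step 2.} Janson applied to the union of the two families gives the exponent $-(\mu_g+\mu_h-\mu_{g\wedge h})+\Delta_{g,h}/2$, where $\mu_{g\wedge h}$ sums over representations common to $g$ and $h$. You have dropped this term, and shared $d$-sets do not only arise for $h=\pm g$. In the same group take $g=(0,v)$ with $v\neq 0$, and $h=g+(2,0)\neq\pm g$. Flipping the sign of any summand with odd first coordinate turns a representation of $g$ into one of $h$. So a constant fraction of $d$-edges are shared, and for fixed $d$ we get $p^de_d(g\wedge h)=\Theta(\log N)$.

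Your class ``$h\pm g$ has an anomalously short representation'' points in this direction, but it has three problems. It is never defined. You never show that $p^de_d(g\wedge h)=o(1)$ outside it. The ``$O(N)$'' count is unproved. Moreover, the count you need is per vertex: each $g$ must have at most $N^{c}$ exceptional partners with $c<\varepsilon$, so that the trivial bound contributes $N^{c}\,\mathbb{E}X=o((\mathbb{E}X)^2)$. A total of $O(N)$ pairs does not give $O(\mathbb{E}X)$ when $\mathbb{E}X\approx N^{\varepsilon}$.

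\textbf{How the paper closes both gaps.} It gets both facts at once from the $\delta$-dependency graph $H$ (Lemma~\ref{Independent set I}). Counting overlapping pairs $(e,f)$ through (O1) shows that typical vertices have $H$-degree at most $N^{3\delta}$. This yields an independent set $I$ of size $N^{1-3\delta}$ with $e_d(x\wedge y)\leqslant N^{d-1-\delta}$ for distinct $x,y\in I$, and $X$ counts the bad elements of $I$.

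In the abelian case you could instead make your idea rigorous. Write $2G=\{2z:z\in G\}$ and $G[2]=\{z:2z=0\}$. For $h\neq\pm g$, a shared $d$-set forces $g-h=2\sum_{i\in D}a_is_i\in 2G$ for some proper nonempty $D$. Therefore $e_d(g\wedge h)\leqslant 4^dN^{d-2}|G[2]|=4^dN^{d-1}/|2G|$, and $p^de_d(g\wedge h)\leqslant 2^dd!\log N/|2G|$. So either this is $o(1)$ for all such pairs, or $|2G|<2^dd!(\log N)^2=N^{o(1)}$ and each $g$ has at most $|2G|$ partners with $g-h\in 2G$.
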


All three of the above results are best possible, as exhibited by the results below:

\begin{theorem}\label{E:D>=3}
   Let $\varepsilon > 0$, $n \in \mathbb{N}$ and $G = \mathbb{Z}_2^n$. Let $N = 2^n = |G|$ and suppose $d \in \mathbb{N}$ with $2 \leqslant d \leqslant d_N$. If $\Gamma \in \mathcal{G}(G,p)$, where  $p \leqslant \sqrt[d]{d!(1 - \varepsilon)\frac{\log{N}}{N^{d-1}}}$,  then with high probability, the diameter of $\Gamma$ is greater than $d$.
\end{theorem}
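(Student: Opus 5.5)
The plan is a second moment argument on the number of vertices that are \emph{not} within distance $d$ of the identity. The property ``diameter $>d$'' is monotone decreasing in $S$, so it suffices to take $p$ equal to the bound in the statement (or within a factor $1-o(1)$ of it). The key structural fact is that in $G=\mathbb{Z}_2^n$ every element is an involution and the group is abelian. Hence $x$ is at distance at most $d$ from $0$ if and only if $x=\sum_{s\in T}s$ for some $T\subseteq S$ with $|T|\leqslant d$: repeated generators cancel and order is irrelevant.

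Since $\Gamma$ is vertex-transitive, the diameter exceeds $d$ if and only if $X>0$, where $X$ is the number of $x\neq 0$ not represented in this way. For each $x$ let $\mathcal{T}_x$ be the family of subsets $T\subseteq G$ with $|T|\leqslant d$ and $\sum T=x$, and let $A_T=\{T\subseteq S\}$.

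\textbf{Counting representations.} First I count $|\mathcal{T}_x|$. Choose $d-1$ distinct elements freely; the last element is then determined, and one discards the $O(d^2N^{d-2})$ choices where it collides with earlier ones. This gives $|\{T\in\mathcal{T}_x:|T|=d\}|=(1+o(1))\binom{N}{d}/N$ uniformly for $d\leqslant d_N$. The smaller sets contribute only a factor $O(d/(Np))=o(1)$ relative to this. Hence
\[\mu_x:=\sum_{T\in\mathcal{T}_x}p^{|T|}=(1+o(1))\frac{(Np)^d}{d!\,N}=(1+o(1))(1-\varepsilon)\log N.\]
By Harris/FKG, $\Pr(x\text{ missed})\geqslant\prod_T(1-p^{|T|})\geqslant e^{-(1+o(1))\mu_x}$. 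Therefore $\mathbb{E}X\geqslant N^{\varepsilon-o(1)}\to\infty$.

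\textbf{Second moment via Janson.} For $x\neq y$, apply Janson's inequality to the family $\mathcal{T}_x\cup\mathcal{T}_y$. This gives
\[\Pr(x,y\text{ both missed})\leqslant \exp(-\mu_x-\mu_y+\Delta_{x,y}),\]
where $\Delta_{x,y}$ sums $p^{|T\cup T'|}$ over distinct intersecting pairs. Comparing with the lower bound from the previous step, $\Pr(x,y\text{ missed})\leqslant(1+o(1))\Pr(x\text{ missed})\Pr(y\text{ missed})$ provided $\Delta_{x,y}=o(1)$ uniformly. Chebyshev then gives $\Var X=o((\mathbb{E}X)^2)$, so $X>0$ with high probability. (The diagonal terms contribute $\mathbb{E}X=o((\mathbb{E}X)^2)$.)

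\textbf{The main obstacle: bounding $\Delta_{x,y}$ uniformly for $d$ up to $d_N$.} For pairs of $d$-sets sharing exactly $j$ elements, $1\leqslant j\leqslant d-1$, the count is as follows. There are about $N^{d-1}$ choices of $T$, then $2^d$ choices of the shared part, and then about $N^{d-j-1}$ completions with the prescribed sum. This gives the bound
\[\Delta\lesssim \sum_{j=1}^{d-1}2^d N^{2d-2-j}p^{2d-j}=\sum_j 2^dN^{-2}(Np)^{2d-j}.\]
Using $(Np)^d\approx d!\,N\log N$, this is at most
\[\sum_j 2^d (d!\log N)^2 N^{-j/d}\leqslant d\,2^d(d!)^2(\log N)^2N^{-1/d}.\]
This is $o(1)$ because $\log N/d\gg 2d\log d+d+2\log\log N$, which is exactly what $4d_N^2\log d_N\sim(1-\gamma)^2\log N$ guarantees. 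Sets of size smaller than $d$ are handled the same way and contribute smaller terms. The care needed is in making all the $1+o(1)$ counting errors uniform in $d\leqslant d_N$, so I would carry explicit error terms such as $O(d^2/N)$ and $O(d/(Np))$ throughout.
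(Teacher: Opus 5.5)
Your proof is correct, and it takes a genuinely different route from the paper's.

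\textbf{How the paper argues.} It proves this theorem together with Theorems~\ref{T:D>=3} and~\ref{T:D>=3-Abelian} by one uniform argument:
\begin{itemize}
\item it restricts to an independent set $I$ of the $\delta$-dependency graph $H$ (Lemma~\ref{Independent set I}), so $X$ counts only unreachable vertices of $I$;
\item it lower-bounds $\Pr(B_x)$ by Kleitman, using $e_i(x)\leqslant N^{i-1+\delta}$ for $i<d$;
\item it upper-bounds $\Pr(B_x\cap B_y)$ by Janson applied only to the $d$-edges, controlling internal overlaps by Lemma~\ref{overlap}, cross overlaps by Lemma~\ref{Independent set I - special case}, and common $d$-edges by non-adjacency in $H$.
\end{itemize}
The group $\mathbb{Z}_2^n$ enters only through (O2b), $e_d(x)\leqslant N^{d-1}/d!$.

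\textbf{How you argue.} You use the fact that in $\mathbb{Z}_2^n$, reachability within distance $d$ is exactly a subset-sum condition, so each set $T$ determines its target $\sum T$. Hence the families $\mathcal{T}_x$ are pairwise disjoint; the analogue of $e_d(x\wedge y)$ is $0$. The overlap bound $O(2^dN^{2d-j-2})$ then holds for \emph{every} pair $x\neq y$. This lets you discard $H$, $I$ and $\delta$ and run the second moment over all of $G\setminus\{0\}$, without losing $N^{3\delta}$ in $\mathbb{E}X$.

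\textbf{What each route buys.} Yours is shorter and self-contained, but it does not survive beyond elementary abelian $2$-groups. In general, $\Gamma_x$ and $\Gamma_y$ can share most of their $d$-edges (e.g.\ $y=x^{-1}$, or conjugates of $x$ when $\cl(x)$ is small). Excluding such pairs is exactly what the dependency graph is for.

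\textbf{Two steps need more than your sketch gives.}

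First, the comparison $\Pr(x,y\text{ missed})\leqslant(1+o(1))\Pr(x\text{ missed})\Pr(y\text{ missed})$ cannot use the Harris bound in the form $e^{-(1+o(1))\mu_x}$. Since $\mu_x\sim(1-\varepsilon)\log N$, a multiplicative $1+o(1)$ in the exponent is not a $1+o(1)$ factor. You need the additive form
\[\prod_T(1-p^{|T|})\geqslant\exp\Bigl\{-\mu_x-\sum_Tp^{2|T|}\Bigr\}\geqslant\exp\{-\mu_x-p\mu_x\},\]
with $p\mu_x=o(1)$. This is what Lemma~\ref{exponential asymptotic} supplies in the paper.

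Second, because you put sets of all sizes into Janson, $\Delta_{x,y}$ contains nested pairs $T'\subsetneq T$. For these, your recipe ``choose $T$, choose the shared part, complete with prescribed sum'' gives about $2^dN^{d-1}p^d/d!=2^d(1-\varepsilon)\log N$, which is not $o(1)$. The fix is to choose $T'$ first and then the nonempty set $T\setminus T'$, whose sum is forced to be $0$ or $x+y$. This costs a further factor of $N$ and gives $O(d\,2^d\log N/N)=o(1)$. Alternatively, apply Janson only to the $d$-sets as the paper does, and absorb $\mu_x-\mu_x^{(d)}=O(d\log N/(Np))=o(1)$ into the error, where $\mu_x^{(d)}$ is the contribution of the $d$-sets.
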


\begin{theorem}\label{E:D>=3-Abelian}
    Let $\varepsilon > 0$, $d,N \in \mathbb{N}$ with $2 \leqslant d \leqslant d_N$, and let $G$ be the cyclic group of order~$N$. If $\Gamma \in \mathcal{G}(G,p)$, where $p \geqslant \sqrt[d]{\frac{d!(1 + \varepsilon)}{2^d} \cdot \frac{\log{N}}{N^{d-1}}}$, then with high probability, the diameter of $\Gamma$ is at most $d$.
\end{theorem}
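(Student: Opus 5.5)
By vertex-transitivity (left multiplication by $G$ acts by automorphisms of $\Gamma$), the diameter of $\Gamma$ is at most $d$ exactly when every $g \in G$ can be written as $g = \epsilon_1 s_1 + \dots + \epsilon_j s_j$ with $j \leqslant d$, $s_i \in S$ and $\epsilon_i \in \{\pm 1\}$. I will prove that for each fixed $g \neq 0$ the probability that $g$ has no such representation is at most $N^{-1-\delta}$ for some $\delta = \delta(\varepsilon) > 0$. A union bound over the $N$ elements then gives the theorem. I will only use representations of length exactly $d$, and I will encode them as monotone events so that Janson's inequality applies.

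\textbf{Step 1: the family of events.} Let $\mathcal{T}_g$ be the family of $d$-subsets $T = \{s_1,\dots,s_d\} \subseteq G$ that satisfy two conditions. First, $s_i \neq \pm s_j$ for $i \neq j$ and no $s_i$ has order at most $2$. Second, some sign vector gives $\sum \epsilon_i s_i = g$. Let $A_T$ be the event $T \subseteq S$; it is an increasing event of probability $p^d$.

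To count $\mathcal{T}_g$, consider ordered tuples $(\epsilon_i s_i)_{i \leqslant d}$ summing to $g$. There are $2^d N^{d-1}$ of them, since the first $d-1$ entries are free and the last one is determined. Among these, the degenerate tuples (some $s_i = \pm s_j$, or an element of order at most $2$) number $O(d^2 2^d N^{d-2})$. A set $T$ can also be counted under two different sign vectors. Both then sum to $g$, so subtracting forces $2\sum_{i\in I}\epsilon_i s_i = 0$ for some nonempty $I$. In $\mathbb{Z}_N$ the equation $2x = 0$ has at most two solutions, so this costs one more degree of freedom: at most $O(4^d N^{d-2})$ tuples. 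Dividing by $d!$ for the orderings gives
\[ \mu = \sum_{T \in \mathcal{T}_g} \Pr(A_T) \geqslant \bigl(1 - O(d^2 4^d/N)\bigr)\frac{2^d N^{d-1}p^d}{d!} \geqslant (1+\varepsilon/2)\log N. \]
The error term is negligible because $d \leqslant d_N$ forces $4^d d^2 = N^{o(1)}$.

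\textbf{Step 2: Janson's inequality.} Janson gives $\Pr(\text{no } A_T \text{ occurs}) \leqslant \exp(-\mu + \Delta/2)$, where $\Delta = \sum p^{2d-k}$ runs over ordered pairs $T \neq T'$ with $|T \cap T'| = k \in [1,d-1]$. So it suffices to show $\Delta = o(\log N)$.

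Fix $T$ and $k$. Choose which $k$ elements of $T$ are shared, in $\binom{d}{k}$ ways. The remaining $d-k$ elements of $T'$, together with signs, satisfy one linear equation, so there are at most $2^d N^{d-k-1}$ choices. Hence
\[ \Delta \leqslant \mu \sum_{k=1}^{d-1}\binom{d}{k}2^d N^{d-k-1}p^{d-k}. \]
We may assume $p$ equals the threshold value, since for larger $p$ one can pass to a random subset of $S$ and monotonicity gives the result. Then $N^{d-1}p^d = \Theta(d!\,2^{-d}\log N)$, so
\[ N^{d-k-1}p^{d-k} = N^{-k/d}\,(N^{d-1}p^d)^{(d-k)/d} \leqslant N^{-k/d}\,(d!\log N). \]
Each term of the sum is therefore at most $4^d d! \log N \cdot N^{-1/d}$.

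\textbf{Step 3 (the main obstacle): uniformity in growing $d$.} This is exactly where the restriction $d \leqslant d_N$ enters. We have $N^{-1/d} = \exp(-\log N/d)$, while $d \cdot 4^d d! (\log N)^2 = \exp(O(d\log d) + O(\log\log N))$. Since $d^2 \log d \leqslant (1-\gamma)^2 \log N/4$, we get $d\log d = o(\log N/d)$. Hence $\Delta = o(1)$, and a fortiori $o(\log N)$.

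Combining Steps 1 and 2, $\Pr(g \text{ not representable}) \leqslant \exp(-(1+\varepsilon/3)\log N)$, and the union bound over the $N$ elements of $G$ completes the proof. I expect the delicate bookkeeping to be concentrated in two places: the degenerate and multiply-signed sets in Step 1, which is where cyclicity, through the bound on solutions of $2x = 0$, is used; and checking in Step 3 that every $d^{O(d)}$ factor is absorbed by $N^{-1/d}$.
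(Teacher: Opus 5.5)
Your strategy is the paper's: Janson's inequality on the $d$-sets with a signed sum equal to $g$, a lower bound on their number from a sign-collision count in $\mathbb{Z}_N$, an overlap bound, and a union bound. However, Step~1 fails for one element, and your claim that every $g\neq 0$ is unreachable with probability at most $N^{-1-\delta}$ is false.

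\textbf{The gap.} Take $N$ even and $g=N/2$. For every valid tuple, the sign vectors $\epsilon$ and $-\epsilon$ both work, since $\sum(-\epsilon_i)s_i=-g=g$. In your collision argument this is the case $I=\{1,\dots,d\}$. There the constraint $2\sum_{i\in I}\epsilon_i s_i=0$ reads $2g=0$, which holds identically, so it costs no degree of freedom. Hence every $T\in\mathcal{T}_g$ is counted at least twice, $|\mathcal{T}_g|\leqslant 2^{d-1}N^{d-1}/d!$, and at the threshold $\mu\leqslant\frac{1+\varepsilon}{2}\log N$. This is not just a weak estimate. By Kleitman's inequality, with the lower-order edges contributing a factor $1-o(1)$, the probability that $N/2$ is at distance greater than $d$ is at least of order $N^{-(1+\varepsilon)/2}$. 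So no bound of the form $N^{-1-\delta}$ can hold for it, and your union bound as written does not go through. Your collision argument is valid when $I$ is a proper subset, and when $I=\{1,\dots,d\}$ with $2g\neq 0$, so every other element is handled correctly.

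\textbf{The repair.} Treat $N/2$ separately, as the paper does via (O3a) and the extra term in $\mathbb{E}X$. For this $g$, keep one sign vector from each pair $\{\epsilon,-\epsilon\}$; your argument then gives $\mu\geqslant(1-o(1))\frac{1+\varepsilon}{2}\log N$. Janson then bounds the failure probability of this single element by $N^{-(1+\varepsilon)/2+o(1)}=o(1)$. The union bound becomes $N\cdot N^{-1-\varepsilon/3}+N^{-(1+\varepsilon)/2+o(1)}\to 0$.

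\textbf{A smaller point in Step~3.} The claim $d\log d=o(\log N/d)$ is false when $d$ is of order $d_N$. One only has $d\log d\leqslant\frac{(1-\gamma)^2}{4}(1+o(1))\frac{\log N}{d}$. Your conclusion $\Delta=o(1)$ still holds, because this constant is below $1$ and $\log N/d\geqslant\log N/d_N\gg\log\log N$. You should state it that way.
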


Before stating the corresponding result for general groups, we recall that for an element $x$ of a group $G$, its conjugacy class is the set 
\[\Cl(x) = \{y^{-1}xy:y \in G\}.\] 
We will write $\cl(x)$ for the size of the conjugacy class of $x$. We also recall that an element $x$ of $G$ with $x^2=1$ is called an involution.

\begin{theorem}\label{E:D=d}
Let $\varepsilon > 0$ be small enough, $d,N \in \mathbb{N}$ with $2 \leqslant d \leqslant d_N$, and 
\[M = \exp\left \{2\sqrt{\log{N}\log{\log{N}}}\right\}.\] 
Let $G$ be a group of order $N$ such that \begin{itemize}
\item[(a)] $G$ contains at most $N^{(2 + \varepsilon)/4}$ involutions.
\item[(b)] $G$ contains at most $N^{1/d}$ elements $x$ with $\cl(x) \leqslant M$.
\item[(c)] $G$ contains at most $N^{1/2d}$ involutions $x$ with $\cl(x) \leqslant M$.
\end{itemize}
If $\Gamma \in \mathcal{G}(G,p)$, where $p \geqslant \sqrt[d]{\frac{1 + \varepsilon}{2^d} \cdot \frac{\log{N}}{N^{d-1}}}$, then with high probability, the diameter of $\Gamma$ is at most $d$.
\end{theorem}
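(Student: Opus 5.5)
By vertex-transitivity (left multiplication by $G$ acts by automorphisms), it suffices to show that with high probability every $x \in G\setminus\{1\}$ can be written as a product $s_1^{\pm1}\cdots s_d^{\pm1}$ of elements of $S$. We then take a union bound over the $N-1$ elements. So the plan is to prove, for each fixed $x$,
\[ \Pr[x \notin (S\cup S^{-1})^{\leqslant d}] = o(1/N) \]
uniformly in $x$. We restrict attention to ``good'' representations
\[ x = s_1^{\epsilon_1}\cdots s_d^{\epsilon_d}, \qquad \epsilon_i\in\{\pm1\}, \]
with $s_1,\dots,s_d$ distinct, and count them. For each of the $2^d$ sign patterns there are about $N^{d-1}$ choices of $(s_1,\dots,s_{d-1})$, and these determine $s_d$. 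This gives about $2^dN^{d-1}$ ordered tuples, hence about $2^dN^{d-1}/\text{(overcount)}$ distinct underlying sets $\{s_1,\dots,s_d\}$. Hypotheses (a)--(c) are there to ensure that the overcount is small for most $x$. Let $X_x$ be the number of $d$-sets $T\subseteq G$ such that some ordering and choice of signs of $T$ gives $x$. The key estimate is $\mathbb{E}X_x \geqslant (1-o(1))\mu$, where
\[ \mu = \frac{2^d N^{d-1}}{d!}\,p^d \geqslant \frac{(1+\varepsilon)\log N}{d!}\cdot\frac{d!}{1}\cdot\frac{1}{1}\;\text{-ish}, \]
and with the stated $p$ this reads $\mu\geqslant(1+\varepsilon)\log N\cdot(1-o(1))$. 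The factor $d!$ appears because a set $T$ is hit by at most $d!$ orderings. For a generic $T$ we want exactly one ordering/sign pattern to give $x$ (no coincidences), so that the sets are counted essentially $2^d\cdot d!/(2^d\cdot d!)$ correctly. Put precisely: we need that, for a typical tuple, no other ordering or sign pattern of the same set also equals $x$. Otherwise $X_x$ loses a constant factor, as happens in $\mathbb{Z}_2^n$ (all signs equal) and in $\mathbb{Z}_N$ (reorderings coincide).

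The coincidence analysis is the main obstacle. A second signed ordering of the same multiset yielding $x$ imposes a nontrivial word equation in $s_1,\dots,s_d$. We classify such equations. \emph{Sign coincidences:} flipping the sign of a single letter forces $s_i^2=$ a conjugate-type relation, and in particular is governed by involutions. By (a), at most $N^{(2+\varepsilon)/4}$ tuples have an involution letter, which is negligible against $N^{d-1}$ once $d\geqslant 2$ (for $d=2$ this needs $\varepsilon$ small, which explains the ``small enough''). \emph{Reordering coincidences:} swapping letters forces commutation relations $s_is_j=s_js_i$, or more generally relations of the form $w_1 = w_2$ with $w_2$ a conjugate of $w_1$. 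The number of solutions is bounded by $\sum_y |C_G(y)| = \sum_y N/\cl(y)$. Splitting according to whether $\cl(y)\leqslant M$, the elements with small class number at most $N^{1/d}$ by (b) and contribute at most $N^{1/d}\cdot N$. The elements with large class contribute at most $N^2/M$. Either way, the number of bad tuples is $N^{d-1}\cdot(N^{-1+1/d}+M^{-1})$, a $o(1)$ fraction. The choice $M=\exp\{2\sqrt{\log N\log\log N}\}$ is designed to beat the $d^{O(d)}=\exp(O(d^2\log d))$ count of word patterns, given $4d^2\log d\lesssim(1-\gamma)^2\log N$. Mixed sign-and-reorder coincidences involving involutions of small class are handled by (c). I expect this bookkeeping, with all relations controlled uniformly in $d\leqslant d_N$, to be the delicate part.

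Finally, to pass from expectation to probability we use Janson's inequality on the events $\{T\subseteq S\}$ for good sets $T$. The result is $\Pr[X_x=0]\leqslant\exp(-\mu+\Delta)$, where $\Delta$ sums over pairs of good sets sharing at least one element. Two good sets sharing $j$ elements with $1\leqslant j\leqslant d-1$ number at most roughly $(2^dN^{d-1})^2 N^{-j}\cdot d^{O(d)}$, since fixing the shared letters and the other set's remaining letters loses freedom. This gives
\[ \Delta\leqslant \sum_{j=1}^{d-1}\mu^2 d^{O(d)}N^{-j}p^{-j}, \]
and since $Np\geqslant N^{1/d}$ each term is at most $\mu^2 d^{O(d)}N^{-(d-1)/d}=o(1)$, again using $d\leqslant d_N$. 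Hence $\Pr[X_x=0]\leqslant N^{-(1+\varepsilon/2)}$, and the union bound over $x$ completes the proof.
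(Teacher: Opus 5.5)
There is a genuine gap. Your skeleton (Janson on the $d$-edges of $\Gamma_x$ for each fixed $x$, then a union bound over $x$) is the paper's. But your union bound needs $\Pr[x\notin(S\cup S^{-1})^{\leqslant d}]=o(1/N)$ for \emph{every} $x\neq 1$, via $p^de_d(x)\geqslant(1-o(1))(1+\varepsilon)\log N$, and this fails for exactly the elements that (a)--(c) concern. You note that the overcount is small only "for most $x$", but the remaining $x$ are never treated.

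If $x$ is an involution, each representation $h_1^{a_1}\cdots h_d^{a_d}=x$ is paired with its reversal $h_d^{-a_d}\cdots h_1^{-a_1}=x^{-1}=x$, so $e_d(x)\leqslant 2^{d-1}N^{d-1}$. If $x$ is central, all $d$ cyclic rotations again represent $x$, so $e_d(x)\leqslant 2^dN^{d-1}/d$, and $e_d(x)\leqslant 2^{d-1}N^{d-1}/d$ for a central involution. These coincidences hold for every tuple, not for a small fraction, so no hypothesis can make them negligible. Your $\sum_y N/\cl(y)$ count tacitly lets the conjugated word range uniformly over $G$; in the rotation coincidence that word is $x$ itself, and the bad fraction is $1/\cl(x)$.

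The uniform claim is genuinely false. Take $d=2$ and $G=H\times\mathbb{Z}_2$ with $H$ the odd-order affine group from the introduction; this $G$ satisfies (a)--(c). Its central involution $x$ has $e_1(x)=1$ and $e_2(x)\leqslant N$, so by Kleitman it is missed with probability at least $(1-o(1))N^{-(1+\varepsilon)/4}$ at the threshold.

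The missing idea is to split the union bound by the type of $x$, as the paper does. First show $p^de_d(x)\geqslant c\,(1-\varepsilon/4)(1+\varepsilon)\log N$ with $c=1,\frac12,\frac1d,\frac1{2d}$, according to whether $x^2=1$ and whether $\cl(x)>M$. For $\cl(x)\leqslant M$ the paper uses only products with $x$ in a fixed position, giving up a factor $d$ to remove the conjugation-by-$x$ coincidences. Then use (a), (b), (c) as bounds $N^{(2+\varepsilon)/4}$, $N^{1/d}$, $N^{1/(2d)}$ on the \emph{number} of exceptional $x$ of each type. Their exponents are calibrated exactly against the weaker failure probabilities $N^{-c(1-\varepsilon/4)(1+\varepsilon)}$.

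There is a second, smaller misattribution. Sign coincidences are not governed by involutions. Flipping one sign in place gives $s_i^2=1$, but a sign flip combined with a reordering gives $(h_i^{a_i}y)^2=z$, where $z$ is an arbitrary element determined by the other letters. So you need a bound on the number of square roots of an arbitrary element, and (a) says nothing about that. The paper combines Theorem~\ref{main-rep} with Lemma~\ref{cl(G)-bound} and (b) to bound this by $2N/\sqrt{M}$. Hypotheses (a) and (c) enter only in the final union bound.

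Two minor slips. With the count of Lemma~\ref{overlap}, the $j$-th term of your $\Delta$ is, up to constants, $\binom{d}{j}^2 j!\,(pN)^{-j}\mu^2$. The sum is therefore dominated by $j=1$, of order $d^2\mu^2N^{-1/d}$, not $\mu^2 d^{O(d)}N^{-(d-1)/d}$. It is still $o(1)$ for $d\leqslant d_N$. Also, your displayed $\mu$ is off by a factor $d!$: the number of good $d$-sets is about $2^dN^{d-1}$.
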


To provide some combinatorial intuition behind the constants $2^d$ and $d!$ appearing in the theorems above, let us briefly describe the underlying algebraic mechanisms: When a generator $s \in S$ is not an involution, the edge between $g$ and $gs$ is exposed twice in the random selection (either via $s$ or $s^{-1}$), meaning that an edge is present with probability $2p-p^2 \approx 2p$. For groups with few involutions, this effective doubling of edge probabilities over paths of length $d$ yields a factor of $(2p)^d = 2^d p^d$, lowering the required threshold by $2^d$. Conversely, in abelian groups, products commute, meaning that all $d!$ permutations of a set of $d$ distinct generators collapse into the same terminal vertex. Consequently, the neighbourhoods expand more slowly, which explains why the threshold probability increases by a factor of $\sqrt[d]{d!}$. 

The growing value of $M$ in Theorem~\ref{E:D=d} is only really needed for the growing diameter $d$. For every fixed $d$, the value of $M$ can be chosen to be a polynomial in $1/\varepsilon$ as long as $\varepsilon$ is small enough, thus increasing the number of families of groups for which it holds. In fact, one should check that there is at least one such family. Even though the symmetric groups do satisfy these properties, the simplest family that we could find is the following:

Fix a prime $p \equiv 3 \bmod 4$ and consider the group $G$ (under composition) of all functions $f_{a,b}:\mathbb{F}_p \to \mathbb{F}_p$ defined by $f_{a,b}(x) = ax+b$ where $a$ is a quadratic residue modulo $p$ and $b \in \mathbb{F}_p$. Then $|G| = p(p-1)/2$ is odd so $G$ has no involutions. We claim that if $b_1$ and $b_2$ are both quadratic residues or both quadratic non-residues then $f_{a,b_1}$ and $f_{a,b_2}$ are conjugates. Indeed, pick a quadratic residue $a'$ such that $b_2 = a'b_1$. Then 
\[ f_{a',0}f_{a,b_1}f^{-1}_{a',0}(x) = f_{a',0}f_{a,b_1}(x/a') = f_{a',0}(ax/a'+b_1) = ax+a'b_1 = f_{a,b_2}(x). \]
Also $f_{a,0}$ with $a\neq 1$ is conjugate to $f_{a,b}$ for at least one $b \neq 0$. Indeed,
\[ f_{1,b}f_{a,0}f^{-1}_{1,b}(x) = f_{1,b}(ax-ab) = ax-ab+b = f_{a,(1-a)b}\]
and since $a \neq 1$, then $(1-a)b \neq 0$ as long as we choose $b \neq 0$. Therefore every non-trivial conjugacy class has size at least $(p-1)/2 \geqslant \sqrt{N/3}$ which is larger than $M$ when $N$ is large enough. 

As already mentioned, the specific case of diameter $d=2$ was previously resolved by two of the three authors in \cite{CM-diam}.  However, the approach utilized there relied heavily on ad-hoc combinatorial counting and complicated 
total probability arguments tailored strictly to pairs. Such methods do not easily scale to larger  diameters without an explosion in technical complexity, as evidenced by the length of the arguments  required just for $d=2$.

In contrast, the present work introduces a significantly more robust, unified probabilistic framework. Unlike standard random graphs where paths expand independently, random Cayley graphs are plagued by structural dependencies and forced intersections arising from underlying group relations. By leveraging an optimized dependency graph construction and a refined application of Janson's inequality directly on intersecting unordered paths, we completely bypass the technical bottlenecks of the previous approach. This new perspective not only allows us to resolve the general diameter $d$ threshold, even for growing diameters $d < d_N =(1-\gamma)\sqrt{\frac{\log N}{2\log \log N}}$, but it does so in a fundamentally more streamlined and elegant manner. Indeed, our general framework yields a considerably shorter and cleaner proof even when restricted back to the $d=2$ case, illuminating the core algebraic and probabilistic mechanisms at play.

We now give an overview of the structure of the paper. In Section~2 we collect the probabilistic and representation theoretic tools that we will need. In Section~3 we introduce the diameter-$d$ graphs and their dependency graph and state various relevant observations related to these graphs. In Section~4 we prove Theorems~\ref{T:D>=3}, \ref{T:D>=3-Abelian}, and~\ref{E:D>=3}. Section~5 is devoted to the proofs of Theorems~\ref{T:<=D} and~\ref{E:D>=3-Abelian}. In Section~6 we prove Theorem~\ref{E:D=d}. Sections~4 and~5 can be read independently, while in Section~6 we will use the notation and ideas introduced in Section~5. 

\section{Main Tools}

Here we list the main tools that we will use in the proofs of our results.

\subsection{Probabilistic Tools}

These are standard probabilistic results. We refer the reader to~\cite{AS} for their proofs.

\begin{theorem}[Markov's inequality]
Let \(X\) be a non-negative random variable. Then, for any \(a>0\),
\[
    \Pr(X\geqslant a)\leqslant \frac{\mathbb{E}X}{a}.
\]
\end{theorem}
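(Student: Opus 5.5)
The plan is to bound the indicator of the event $\{X\geqslant a\}$ pointwise by a multiple of $X$, and then take expectations. No earlier result of the paper is needed; only monotonicity and linearity of expectation are used.

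Fix $a>0$ and let $\mathbf{1}_{\{X\geqslant a\}}$ denote the indicator of the event $\{X\geqslant a\}$. The key step is the pointwise inequality
\[
a\,\mathbf{1}_{\{X\geqslant a\}}\leqslant X ,
\]
which holds on every outcome. I will check it in two cases:
\begin{itemize}
\item on the event $\{X\geqslant a\}$ the left-hand side equals $a$, which is at most $X$;
\item on the complement the left-hand side is $0$, which is at most $X$ because $X$ is non-negative.
\end{itemize}
This second case is the only place where non-negativity is used, and it is essential: without it the inequality fails.

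Next I take expectations of both sides. Monotonicity of expectation is valid for non-negative random variables even when $\mathbb{E}X=\infty$, in which case the claim is trivial anyway. Linearity, together with $\mathbb{E}\,\mathbf{1}_{\{X\geqslant a\}}=\Pr(X\geqslant a)$, then gives
\[
a\Pr(X\geqslant a)\leqslant \mathbb{E}X .
\]
Dividing by $a>0$ yields the stated bound.

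There is no real obstacle. The only points needing care are using non-negativity for the pointwise bound and allowing $\mathbb{E}X$ to be infinite.
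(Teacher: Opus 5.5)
Your proof is correct and complete: the pointwise bound $a\,\mathbf{1}_{\{X\geqslant a\}}\leqslant X$ followed by taking expectations is the standard argument, and you handle non-negativity and the case $\mathbb{E}X=\infty$ properly. The paper gives no proof of its own here; it lists Markov's inequality as a standard tool and refers the reader to Alon and Spencer, where this indicator-function argument is the usual one.
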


\begin{theorem}[Chebyshev's Inequality]
Let $X$ be a random variable which takes values on the non-negative integers and suppose that it has finite variance and expectation. If $\mathbb{E} X>0$, then
\[
\Pr(X = 0) \leqslant \frac{\Var(X)}{(\mathbb{E} X)^2}.
\]
\end{theorem}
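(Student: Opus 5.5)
The plan is to deduce this from Markov's inequality, stated just above, applied to the non-negative random variable $Y = (X - \mathbb{E}X)^2$. The only idea needed is that the event $\{X = 0\}$ forces $X$ to deviate from its mean by at least $\mathbb{E}X$. Since $X$ has finite variance, $\mathbb{E}Y = \Var(X)$ is finite, so Markov's inequality applies.

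First I observe that if $X = 0$ then $|X - \mathbb{E}X| = \mathbb{E}X$, because $\mathbb{E}X > 0$. Hence
\[
\{X = 0\} \subseteq \{|X - \mathbb{E}X| \geqslant \mathbb{E}X\} = \{Y \geqslant (\mathbb{E}X)^2\}.
\]
Next, since $(\mathbb{E}X)^2 > 0$, Markov's inequality with $a = (\mathbb{E}X)^2$ gives
\[
\Pr(X = 0) \leqslant \Pr\bigl(Y \geqslant (\mathbb{E}X)^2\bigr) \leqslant \frac{\mathbb{E}Y}{(\mathbb{E}X)^2} = \frac{\Var(X)}{(\mathbb{E}X)^2}.
\]

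The main step to get right is the event inclusion; it is exactly where the hypothesis $\mathbb{E}X > 0$ is used, both to identify $|0 - \mathbb{E}X|$ with $\mathbb{E}X$ and to ensure $a > 0$ in Markov's inequality. The assumption that $X$ takes non-negative integer values is not actually needed for this argument. It only reflects how the inequality will be used later, with $X$ a count of bad configurations and the second moment method applied to it.
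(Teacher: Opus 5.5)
Your argument is correct: noting $\{X=0\}\subseteq\{|X-\mathbb{E}X|\geqslant \mathbb{E}X\}$ and then applying Markov's inequality to $(X-\mathbb{E}X)^2$ with $a=(\mathbb{E}X)^2>0$ is the standard derivation. The paper gives no proof of its own and simply refers to Alon and Spencer, where the inequality is obtained in essentially this way; you are also right that integrality of $X$ is not needed.
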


\begin{theorem}[Kleitman's Inequality]
Let $\Omega$ be a finite set and let $\{F_i\}_{i \in I}$ be subsets of $\Omega$, where $I$ is a finite index set. Let $R$ be a random subset of $\Omega$, under the standard product measure, and for each $i \in I$ let $E_i$ be the event that $F_i \subseteq R$. Then 
\[
\Pr\left(\bigcap_{i\in I} \overline{E_i} \right) \geqslant \prod_{i \in I} \Pr(\overline{E_i}).
\]
\end{theorem}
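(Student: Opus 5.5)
The statement to prove is Kleitman's inequality: the events $\overline{E_i}$ are decreasing events under the product measure, so it suffices to show that decreasing events are positively correlated and then induct. Since the paper cites~\cite{AS} for this, I would present the standard argument (the Harris--Kleitman inequality) in a self-contained way.

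\textbf{Step 1: the two-event case.} For every $i$, the event $\overline{E_i}=\{F_i\not\subseteq R\}$ is decreasing: if it holds for $R$ and $R'\subseteq R$, then it holds for $R'$. Any intersection of decreasing events is again decreasing. It therefore suffices to prove that for decreasing events $A,B$ on the product space $\{0,1\}^{\Omega}$ we have
\[\Pr(A\cap B)\geqslant \Pr(A)\Pr(B).\]
Given this, induction on $|I|$ completes the proof. Taking $A=\bigcap_{i\neq j}\overline{E_i}$ and $B=\overline{E_j}$ gives
\[\Pr\Bigl(\bigcap_{i}\overline{E_i}\Bigr)\geqslant \Pr\Bigl(\bigcap_{i\neq j}\overline{E_i}\Bigr)\Pr(\overline{E_j}),\]
and the induction hypothesis applies to the first factor.

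\textbf{Step 2: the correlation inequality, by induction on $n=|\Omega|$.} Write $f=\mathbf{1}_A$ and $g=\mathbf{1}_B$. Both are non-increasing functions on $\{0,1\}^n$, and we need $\mathbb{E}[fg]\geqslant \mathbb{E}f\,\mathbb{E}g$.

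For $n=1$, let $x,y\in\{0,1\}$ be independent copies of the single coordinate. Since $f$ and $g$ are both non-increasing, $(f(x)-f(y))(g(x)-g(y))\geqslant 0$ for all $x,y$. Taking expectations gives $2\mathbb{E}[fg]-2\mathbb{E}f\,\mathbb{E}g\geqslant 0$.

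For the inductive step, condition on the last coordinate $x_n$. For each fixed value of $x_n$, the restrictions of $f$ and $g$ are non-increasing in the remaining $n-1$ coordinates, so by induction
\[\mathbb{E}[fg\mid x_n]\geqslant \mathbb{E}[f\mid x_n]\,\mathbb{E}[g\mid x_n].\]
The maps $x_n\mapsto \mathbb{E}[f\mid x_n]$ and $x_n\mapsto \mathbb{E}[g\mid x_n]$ are also non-increasing, because $f$ and $g$ are monotone pointwise and the coordinates are independent. The case $n=1$ applied to these two functions then gives
\[\mathbb{E}\bigl[\mathbb{E}[f\mid x_n]\,\mathbb{E}[g\mid x_n]\bigr]\geqslant \mathbb{E}f\,\mathbb{E}g.\]
Combining the two displays finishes the induction.

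\textbf{Main obstacle.} The only delicate point is the inductive step, specifically the use of independence of the coordinates under the product measure. Independence is what guarantees that conditioning on $x_n$ leaves a product measure on the other $n-1$ coordinates, so the induction hypothesis applies. It is also what makes the conditional expectations monotone in $x_n$.
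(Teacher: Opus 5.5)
Your proof is correct. The paper does not prove this statement; it quotes it from Alon and Spencer, where it is reached by a different route. There, the Four Functions Theorem of Ahlswede and Daykin gives the FKG inequality for log-supermodular measures on a distributive lattice. The product measure $\mu(R)=\prod_{\omega\in R}p_\omega\prod_{\omega\notin R}(1-p_\omega)$ satisfies $\mu(S)\mu(T)=\mu(S\cup T)\mu(S\cap T)$, so any two decreasing events are positively correlated. One then iterates over the $\overline{E_i}$ exactly as in your Step~1.

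You instead prove the two-event correlation directly, by the Harris-style induction on $|\Omega|$. The one-coordinate case uses the symmetrisation $(f(x)-f(y))(g(x)-g(y))\geqslant 0$, and the inductive step conditions on the last coordinate.

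Your route is shorter and self-contained, using nothing beyond independence of the coordinates. It also works verbatim when the inclusion probabilities vary from element to element, which is the generality in which the paper states Janson's inequality. The FKG route needs more machinery, but it yields a much more general correlation inequality, valid for any log-supermodular measure and not only for product measures.

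One cosmetic point: if some element is included with probability $0$ or $1$, interpret $\mathbb{E}[f\mid x_n=b]$ as the average of $f(z,b)$ over $z\in\{0,1\}^{n-1}$ under the product measure on the remaining coordinates, rather than as a conditional expectation on a null event. Nothing else in your argument changes.
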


\begin{theorem}[Janson's Inequality]
Let $\Omega$ be a finite set and let $\{F_i\}_{i \in I}$ be subsets of $\Omega$, where $I$ is a finite index set. Let $R$ be a random subset of $\Omega$ where each element of $\Omega$ is chosen independently with some probability (which may depend on the element), and for each $i \in I$ let $E_i$ be the event that $F_i \subseteq R$. Then
\[
\Pr\left(\bigcap_{i\in I} \overline{E_i} \right) \leqslant \exp\left(-\sum_{i \in I} \Pr(E_i) + \frac{1}{2}\sum_{i \in I}\sum_{\{j \neq i : F_i \cap F_j \neq \emptyset\}} \Pr(E_i \cap E_j) \right).
\]
\end{theorem}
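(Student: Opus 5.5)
This is Janson's inequality in its standard form, and I would follow the usual proof via correlation inequalities (as in Alon--Spencer). Write $\mu=\sum_i \Pr(E_i)$ and let $\Delta$ be the double sum in the exponent, taken over ordered pairs. Order the index set as $I=\{1,\dots,m\}$ and use the chain rule:
\[
\Pr\Bigl(\bigcap_{i} \overline{E_i}\Bigr)=\prod_{i=1}^m \Pr\Bigl(\overline{E_i}\;\Big|\;\bigcap_{j<i}\overline{E_j}\Bigr).
\]
We may assume every $\Pr(\bigcap_{j<i}\overline{E_j})>0$, since otherwise the left-hand side is $0$ and there is nothing to prove. The goal is the per-factor bound
\[
\Pr\Bigl(E_i \;\Big|\; \bigcap_{j<i}\overline{E_j}\Bigr)\geqslant \Pr(E_i)-\sum_{j<i,\;F_j\cap F_i\neq\emptyset}\Pr(E_i\cap E_j).
\]
Granting this, use $1-x\leqslant e^{-x}$ on each factor and multiply. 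The exponent becomes $-\mu+\sum_i\sum_{j<i,\,F_i\cap F_j\ne\emptyset}\Pr(E_i\cap E_j)$. Each unordered intersecting pair appears exactly once in this sum, so it equals $\frac12$ of the ordered double sum in the statement, which is exactly the claimed bound.

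To prove the per-factor bound, fix $i$. Split the earlier indices into $D=\{j<i: F_j\cap F_i\neq\emptyset\}$ and $U=\{j<i: F_j\cap F_i=\emptyset\}$. Set $A=\bigcap_{j\in D}\overline{E_j}$ and $B=\bigcap_{j\in U}\overline{E_j}$. Then
\[
\Pr(E_i\mid A\cap B)=\frac{\Pr(E_i\cap A\cap B)}{\Pr(A\cap B)}\geqslant \frac{\Pr(E_i\cap A\cap B)}{\Pr(B)}=\Pr(E_i\cap A\mid B).
\]
Next write $\Pr(E_i\cap A\mid B)=\Pr(E_i\mid B)-\Pr(E_i\cap \overline{A}\mid B)$ and bound the two terms separately.

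For the first term, $E_i$ depends only on the coordinates in $F_i$, while $B$ depends only on coordinates outside $F_i$. By independence of the product measure, $\Pr(E_i\mid B)=\Pr(E_i)$.

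For the second term, $\overline{A}\subseteq\bigcup_{j\in D}E_j$, so $\Pr(E_i\cap\overline{A}\mid B)\leqslant\sum_{j\in D}\Pr(E_i\cap E_j\mid B)$. Each $E_i\cap E_j$ is an increasing event and $B$ is a decreasing event. By Harris's inequality (the correlation inequality behind Kleitman's inequality stated above) they are negatively correlated, so $\Pr(E_i\cap E_j\mid B)\leqslant \Pr(E_i\cap E_j)$. Combining the two bounds gives the per-factor inequality.

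The only non-elementary ingredient is this final correlation step: an increasing event and a decreasing event are negatively correlated under a product measure with arbitrary (element-dependent) probabilities. I would either invoke it directly as the Harris--Kleitman inequality, or prove it quickly by induction on $|\Omega|$, conditioning on one coordinate and using that the conditional probabilities are monotone in that coordinate. I expect this to be the main, though standard, obstacle. Everything else is bookkeeping, in particular checking that the ordered-pair sum in the statement produces the factor $\frac12$.
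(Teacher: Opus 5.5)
Your proof is correct and is exactly the standard Alon--Spencer argument: the chain rule, splitting the earlier indices according to whether their sets meet $F_i$, independence for the disjoint part, and the Harris--Kleitman correlation inequality for the rest. The paper does not prove Janson's inequality itself but refers to Alon and Spencer, where this is the proof given; you are also right that the correlation step needs the general form of Harris's inequality (an increasing event and a decreasing event are negatively correlated), not merely the special product form of Kleitman's inequality stated in the paper.
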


\subsection{Group Theoretic Tools}

Recall that for an element $x$ of a group $G$ we denote the size of its conjugacy class by $\cl(x)$. We also write $\cl(G)$ for the number of conjugacy classes of $G$.

By Orbit-Stabilizer Theorem, it is easy to deduce the following standard result:

\begin{theorem}\label{commuting_pairs}
    Let $G$ be a group of order $n$. Then, for every element $x$ of $G$,
    \begin{itemize}
        \item[(a)] there are $n/\cl(x)$ elements $y$ such that $y^{-1}xy=x$ and
        \item[(b)] there are at most $n/\cl(x)$ elements $y$ such that $y^{-1}xy=x^{-1}$. 
    \end{itemize}
\end{theorem}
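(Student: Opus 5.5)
The statement to prove is Theorem~\ref{commuting_pairs}. The plan is to apply the Orbit--Stabilizer Theorem to the conjugation action of $G$ on itself, $y\cdot x = y^{-1}xy$ (a right action, which is equally fine).

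For part (a), the set of $y$ with $y^{-1}xy=x$ is exactly the stabilizer of $x$ under this action, namely the centralizer $C_G(x)$. The orbit of $x$ is $\Cl(x)$. Orbit--Stabilizer gives $|C_G(x)|\cdot\cl(x)=n$, so there are exactly $n/\cl(x)$ such elements $y$.

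For part (b), let $T=\{y\in G: y^{-1}xy=x^{-1}\}$. There are two cases.

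\begin{itemize}
\item If $T$ is empty, the bound holds trivially.
\item Otherwise, fix $y_0\in T$. For any $y\in T$ we have
\[
(y_0y^{-1})^{-1}\,x\,(y_0y^{-1}) = y\,y_0^{-1}xy_0\,y^{-1} = y x^{-1} y^{-1}.
\]
Since $y^{-1}xy=x^{-1}$, we get $yx^{-1}y^{-1}=x$, so $y_0y^{-1}\in C_G(x)$. Equivalently, $y\in C_G(x)\,y_0$; more directly, $yy_0^{-1}$ centralizes $x$.
\end{itemize}

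To see this directly, note that if $y,y'\in T$ then
\[
(y'y^{-1})^{-1}\,x\,(y'y^{-1}) = y\,(y'^{-1}xy')\,y^{-1} = yx^{-1}y^{-1} = x.
\]
Hence the map $y\mapsto y y_0^{-1}$ is an injection from $T$ into $C_G(x)$. In fact $T$ is precisely the coset $C_G(x)y_0$. Therefore
\[
|T|\leqslant |C_G(x)| = n/\cl(x)
\]
by part (a).

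There is no real obstacle here. The only care needed is the coset bookkeeping in (b), and handling the case where $x$ is not conjugate to its inverse, in which $T=\emptyset$ and the bound is trivial.
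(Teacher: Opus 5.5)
Your proof is correct and takes essentially the paper's route: the paper gives no argument beyond citing Orbit--Stabilizer, and your (a) is exactly that, while your (b) is the standard coset refinement. In (b) you show that the set of $y$ with $y^{-1}xy=x^{-1}$ is either empty or a single coset $C_G(x)y_0$, which gives the bound. The duplicated verification in (b) could be trimmed, but nothing is missing.
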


The following representation theoretic result was proved and used in~\cite{CM-diam} using the Frobenius-Schur indicator. We are not aware of any earlier proof of it.
\begin{theorem}\label{main-rep}
    Let $G$ be a group of order $n$ and let $x$ be an element of $G$. Then there are at most $\sqrt{n\cl(G)}$ elements $y$ of $G$ such that $y^2 = x$. 
\end{theorem}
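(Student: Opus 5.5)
The plan is to express the number of square roots of $x$ as a combination of irreducible characters and then apply Cauchy--Schwarz. Let $r(x) = |\{y \in G : y^2 = x\}|$. The function $r$ is a class function on $G$, since $y^2 = x$ if and only if $(g^{-1}yg)^2 = g^{-1}xg$. It therefore expands in the basis of irreducible complex characters $\chi_1,\ldots,\chi_k$, where $k = \cl(G)$.

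The first step is to identify the coefficients. Using the inner product $\langle r,\chi\rangle = \frac{1}{n}\sum_{x} r(x)\overline{\chi(x)}$, we get $\frac{1}{n}\sum_{y} \overline{\chi(y^2)}$. This is the complex conjugate of the Frobenius--Schur indicator
\[\nu(\chi) = \frac{1}{n}\sum_{y\in G}\chi(y^2).\]
The classical Frobenius--Schur theorem says $\nu(\chi) \in \{-1,0,1\}$, so $\nu(\chi)$ is real. This gives
\[ r(x) = \sum_{i=1}^{k} \nu(\chi_i)\chi_i(x). \]
I will quote the Frobenius--Schur theorem as standard. It rests on the decomposition of $V\otimes V$ into symmetric and alternating parts, together with the fact that $\chi(y^2) = \chi_{\mathrm{Sym}^2}(y) - \chi_{\wedge^2}(y)$.

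The second step is the estimate. Since $|\nu(\chi_i)| \leqslant 1$ and $|\chi_i(x)| \leqslant \chi_i(1)$ (a character value is a sum of $\chi_i(1)$ roots of unity), we have
\[ r(x) \leqslant \sum_{i=1}^k \chi_i(1). \]
Cauchy--Schwarz and the identity $\sum_i \chi_i(1)^2 = n$ then give
\[ \sum_{i=1}^k \chi_i(1) \leqslant \sqrt{k\sum_{i=1}^k \chi_i(1)^2} = \sqrt{n\,\cl(G)}, \]
which is the claimed bound.

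The only nontrivial ingredient is the identity $r = \sum \nu(\chi)\chi$ with $\nu(\chi)\in\{0,\pm1\}$, that is, the Frobenius--Schur theorem. If a self-contained argument is wanted, I would derive $|\nu(\chi)|\leqslant 1$ directly: $\nu(\chi)$ equals $\dim(\mathrm{Sym}^2 V)^G - \dim(\wedge^2 V)^G$. Each of these invariant dimensions is at most $\dim \mathrm{Hom}_G(V^*,V) \leqslant 1$ by Schur's lemma, and they cannot both be $1$. The rest of the argument is routine.
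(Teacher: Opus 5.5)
Your proof is correct and is essentially the argument the paper relies on: the paper gives no proof itself and credits the result to the Frobenius--Schur indicator argument of the earlier diameter-$2$ paper. That argument is the one you give: expand the square-root counting function as $\sum_{\chi}\nu(\chi)\chi$ with $\nu(\chi)\in\{0,\pm1\}$, bound $|\chi(x)|\leqslant\chi(1)$, and finish with Cauchy--Schwarz against $\sum_{\chi}\chi(1)^2=n$.
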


We will apply Theorem~\ref{main-rep} by having a bound on the number of elements of $G$ which have a `small' conjugacy class size. The following result is a simple group theoretic exercise which shows how this leads to a `small' number of conjugacy classes.

\begin{lemma}\label{cl(G)-bound}
If $G$ is a group of $n$ elements such that at most $m$ elements $x$ of $G$ satisfy $\cl(x) \leqslant M$, then $\cl(G) \leqslant m + n/M$.    
\end{lemma}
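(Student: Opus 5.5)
The plan is a direct counting argument via the class equation. Let $A$ be the set of elements $x \in G$ with $\cl(x) \leqslant M$. By hypothesis $|A| \leqslant m$. Since conjugate elements have the same class size, $A$ is a union of conjugacy classes. So the conjugacy classes of $G$ split into those contained in $A$ and those contained in $G \setminus A$.

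We bound each type separately. Every class is nonempty, so the number of classes contained in $A$ is at most $|A| \leqslant m$. This is crude but sufficient: in the worst case these classes are singletons, for instance central elements.

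For the remaining classes, each has size strictly greater than $M$. They are pairwise disjoint subsets of $G \setminus A$, which has at most $n$ elements. Hence if there are $k$ such classes, then $kM < |G \setminus A| \leqslant n$, giving $k \leqslant n/M$.

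Adding the two counts yields $\cl(G) \leqslant m + n/M$, as required. There is no real obstacle. The only point needing care is that $\cl$ is constant on conjugacy classes, which is exactly what lets us partition the classes according to whether their size is at most $M$ or exceeds $M$.
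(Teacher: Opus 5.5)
Your proposal is correct and is essentially the paper's argument: the paper writes $\cl(G)=\sum_{x\in G}1/\cl(x)$ and bounds the terms with $\cl(x)\leqslant M$ by $1$ (at most $m$ of them) and the remaining terms by $1/M$ (at most $n$ of them), which is your class-by-class count with each class's contribution spread over its elements. One minor point: write $kM \leqslant |G\setminus A|$ rather than a strict inequality, so that the case $k=0$ with $A=G$ is covered.
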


\begin{proof}
Since
\[ \sum_{y \in \Cl(x)} \frac{1}{\cl(x)} = 1,\]
then
\[ \cl(G) = \sum_{x \in G} \frac{1}{\cl(x)} \leqslant m + \sum_{\substack{x \in G \\ \cl(x) > M}} \frac{1}{\cl(x)} \leqslant m + \frac{n}{M}. \qedhere\]
\end{proof}

\subsection{Other Tools}

We will use the following result from analysis.
\begin{lemma}\label{exponential asymptotic}
If $(a_n) \to 0$ and $(a_n^2b_n) \to 0$, with $a_n<1$ and $0\leqslant b_n$, then $(1-a_n)^{b_n} \sim \exp\left\{-a_nb_n\right\}$.
\end{lemma}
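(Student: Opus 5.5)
We prove the statement by showing that $(1-a_n)^{b_n}/\exp\{-a_nb_n\}\to 1$, which is equivalent to $b_n\log(1-a_n)+a_nb_n\to 0$. We take logarithms and compare $\log(1-a_n)$ with $-a_n$ using the Taylor expansion of the logarithm. Since $a_n\to 0$, we may assume, discarding finitely many terms, that $0\leqslant |a_n|\leqslant 1/2$. In the intended applications $a_n\geqslant 0$ (it is a probability), but the argument only needs $|a_n|\leqslant 1/2$.

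The key step is the elementary estimate
\[ \bigl|\log(1-x)+x\bigr| \leqslant x^2 \quad\text{for } |x|\leqslant 1/2. \]
This follows from the power series $\log(1-x)+x=-\sum_{k\geqslant 2}x^k/k$, whose absolute value is at most $\frac{x^2}{2}\sum_{j\geqslant 0}|x|^j\leqslant \frac{x^2}{2}\cdot 2=x^2$. Applying it with $x=a_n$ and multiplying by $b_n\geqslant 0$ gives
\[ \bigl|b_n\log(1-a_n)+a_nb_n\bigr|\leqslant a_n^2b_n\to 0 \]
by hypothesis. Exponentiating, $(1-a_n)^{b_n}=\exp\{-a_nb_n\}\cdot\exp\{\delta_n\}$ with $|\delta_n|\leqslant a_n^2b_n\to0$. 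Hence the ratio tends to $1$, which is exactly the claimed asymptotic equivalence.

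There is no real obstacle. The only points needing care are making sure $1-a_n>0$, so that the logarithm is defined (guaranteed by $a_n<1$), and handling the finitely many initial terms where $|a_n|$ may exceed $1/2$; these do not affect a limit statement. Note that no assumption is made on $a_nb_n$ itself, so it may tend to infinity. The statement is a ratio asymptotic, which is why we bound the difference of logarithms rather than the difference of the two quantities.
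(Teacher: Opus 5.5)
Your proof is correct and follows the paper's argument. Both take logarithms, control $\log(1-a_n)+a_n$ by a term quadratic in $a_n$, multiply by $b_n\geqslant 0$ to get an $o(1)$ error using $a_n^2b_n\to 0$, and exponentiate; your explicit bound $|\log(1-x)+x|\leqslant x^2$ for $|x|\leqslant 1/2$ simply makes the paper's $-a_n^2/2+O(a_n^3)$ remainder quantitative.
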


\begin{proof}
We have
\[ \log(1-a_n) = -a_n - \frac{a_n^2}{2} + O(a_n^3)\]
so the existing conditions guarantee that
\[ b_n\log(1-a_n) = -a_nb_n + o(1).\]
The result follows since $(1-a_n)^{b_n} = e^{b_n\log(1-a_n)}$.
\end{proof}

\section{The diameter-\texorpdfstring{$d$}{d} graphs and their dependency graph}

Let $S$ be a subset of $G$ and let $x \in G \setminus \{1\}$. Then $1=x_0,x_1,\ldots,x_d=x$ is a path of length $d$ from $1$ to $x$ in $\Gamma(G,S)$ if and only if the $x_i$'s are distinct, and for each $i \in \{1,2,\ldots,d\}$ we have that $x_{i-1}^{-1}x_i \in S$ or $(x_{i-1}^{-1}x_i)^{-1} = x_i^{-1}x_{i-1} \in S$. Equivalently, we have a path of length $d$ from $1$ to $x$ in $\Gamma(G,S)$ if and only if we can find $g_1,\ldots,g_d \in S$ and $a_1,\ldots,a_d \in \{-1,1\}$ such that 
\[ g_1^{a_1} \cdots g_d^{a_d} = x\]
with all $1,g_1^{a_1},g_1^{a_1}g_2^{a_2},\ldots,g_1^{a_1} \cdots g_d^{a_d} = x$ being distinct. Note that even though the $x_i$'s are distinct, the $g_i$'s might not be so. 

For $\Gamma(G,S)$ to have diameter greater than $d$, we want, for at least one $x \in G \setminus \{1\}$, to avoid all paths of length $d$ or less from $1$ to $x$.

To this end, we define the following diameter-$d$ graphs:

For each $x\in G\setminus\{1\}$, define $\Gamma_x$ to be the hypergraph
with vertex set $G\setminus\{1\}$ whose edges are the subsets
$E\subseteq G\setminus\{1\}$ for which there exist an integer $\ell$,
with $|E|\leqslant \ell\leqslant d$, elements $h_1,\ldots,h_\ell\in E$, and signs
$a_1,\ldots,a_\ell\in\{-1,1\}$, such that every element of $E$ appears
at least once among $h_1,\ldots,h_\ell$, and
\[ h_1^{a_1}\cdots h_\ell^{a_\ell}=x.\]
If $|E|=k$, we call $E$ a $k$-edge of $\Gamma_x$. We denote by $E_k(x)$ the set of $k$-edges of $\Gamma_x$, and by $e_k(x)$ its cardinality. We also call the choices of the ordered sets $(h_1,\ldots,h_{\ell})$ and $(a_1,\ldots,a_{\ell})$ the signed ordered representation of $E$. 

Note that the definition allows walks of length at most $d$, rather than
only paths: the elements $h_1,\ldots,h_\ell$ need not be distinct. This
does not change the event that $x$ is reachable from $1$ in at most $d$
steps. Any such walk contains a path from $1$ to $x$, which is enough for our current purposes. 

We will call an expression $h_1^{a_1}\cdots h_\ell^{a_\ell}$ an
$\ell$-product. If this product witnesses that $E$ is an edge of
$\Gamma_x$, then every element of $E$ appears among
$h_1,\ldots,h_\ell$, though repetitions are allowed. Thus $|E|\leqslant \ell$.

We note  the following observations about the diameter-$d$ graphs:
\begin{itemize}
\item[(O1)] For every $k\leqslant d$ and every $g_1,\ldots,g_k \in G \setminus \{1\}$ there are at most $(2k)^{d+1}$ choices of $x$ for which the set $\{g_1,\ldots,g_k\}$ is an edge of $\Gamma_x$. 

Indeed, for every walk of length $r$ (where $k \leqslant r \leqslant d$), we have at most $k^r$ ways to choose the order of the elements and at most $2^r$ ways to choose the exponents. So we have at most
\[ (2k)^k + (2k)^{k+1} + \cdots + (2k)^d \leqslant (2k)^{d+1}\]
such choices.

\item[(O2)] Every $\Gamma_x$ has at most $2^d N^{d-1}$ edges of size $d$. 
First choose the signs $a_1,\ldots,a_d$ and the elements $g_1,\ldots,g_{d-1}$. The equation
\[g_1^{a_1}\cdots g_d^{a_d}=x\]
determines $g_d$ uniquely. Thus there are at most $2^dN^{d-1}$ signed ordered representations, and hence at most that many $d$-edges.
\begin{itemize}
\item[(O2a)] If $G$ is abelian, then we should use only one of the $d!$ ways of ordering the elements. So in this case, every $\Gamma_x$ has at most $2^d N^{d-1}/d!$ edges of size $d$.
\item[(O2b)] If $G = \mathbb{Z}_2^n$, then we should additionally only use one of the $2^d$ ways of choosing the exponents. So in this case, every $\Gamma_x$ has at most $N^{d-1}/d!$ edges of size $d$.
\end{itemize}
\item[(O3)] Every $\Gamma_x$ has 
\[ e_d(x) \geqslant (1+o(1))\frac{N^{d-1}}{d!}.\] 
 In order to give a lower bound, we restrict ourselves to products of the form $g_1 \cdots g_{d-2}g_{d-1}^{-1} g_d = x$ (i.e., with all exponents except that on $g_{d-1}$ positive). 
We first choose $g_1,\ldots,g_{d-1}$ from $G$. The choices for which
$g_i=1$ for some $i$, or $g_i=g_j$ for some $i<j<d$ contribute only $O_d(N^{d-2})$ tuples, so they may be discarded. For the remaining choices, $g_d$ is determined by
$g_d=g_{d-1}g_{d-2}^{-1}\cdots g_1^{-1}x$.
If $g_d=1$ or $g_d=g_i$ for some $i<d-1$, then one degree of freedom is lost.
If $g_d=g_{d-1}$, then $g_{d-2}^{-1}\cdots g_1^{-1}x=1$,
equivalently $g_1\cdots g_{d-2}=x$, again giving only $O_d(N^{d-2})$ tuples
after varying $g_{d-1}$.  So, we have at least $N^{d-1}-O_d(N^{d-2})$ valid ordered tuples. Dividing by $d!$ to ignore the order in which the elements appear (as the same set of elements but with the product taken in a different order could in some cases lead to the same value of product), we get the claim.

\begin{itemize}
    \item[(O3a)] If $G$ is cyclic and $x\neq x^{-1}$, then every $\Gamma_x$ has at least
    \[
        \frac{    2^d\left((N-1)^{d-1}-(1+2\binom d2)N^{d-2}\right)
        -2^d(2^d-1)N^{d-2}}{d!}
    \]
    edges of size $d$. In particular,
    \[
        e_d(x)\geqslant (1+o(1))\frac{2^dN^{d-1}}{d!}.
    \]
    If $x=x^{-1}$, then the same argument gives
    \[
        e_d(x)\geqslant (1+o(1))\frac{2^{d-1}N^{d-1}}{d!}.
    \]

Indeed, fix a choice of signs $a_1,\ldots,a_d\in\{-1,1\}$. The
argument in (O3) gives at least
\[
    (N-1)^{d-1}-\left[1+2\binom{d}{2}\right]N^{d-2}
\]
ordered $d$-tuples $(g_1,\ldots,g_d)$ with distinct entries such that
$g_1^{a_1}\cdots g_d^{a_d}=x.$
Thus, before accounting for tuples counted by more than one sign pattern,
we get $2^d$ times this many ordered tuples.

It remains to subtract sign-collision tuples. Fix two distinct sign
vectors $\mathbf{a}=(a_1,\ldots,a_d)$ and $\mathbf{b}=(b_1,\ldots,b_d)$ in $\{-1,1\}^d$. Writing $G$ additively, the two corresponding equations are
\[a_1g_1+\cdots+a_dg_d=x \quad \text{and} \quad b_1g_1+\cdots+b_dg_d=x.\]
If $\mathbf b=-\mathbf a$, then these equations imply $x=-x$, which is
impossible when $x\neq x^{-1}$. Otherwise, since $\mathbf b\neq\mathbf a$,
there exist indices $j,k$ such that
\[a_jb_k-a_kb_j=\pm 2.\]
After fixing all variables except $g_j$ and $g_k$, the two displayed
linear equations determine $g_j,g_k$ up to at most two possibilities,
because the determinant is $\pm2$ and $G$ is cyclic. Hence each pair of
distinct sign vectors contributes at most $2N^{d-2}$ collision tuples.
Summing over the $\binom{2^d}{2}$ pairs of sign vectors gives at most
\[2\binom{2^d}{2}N^{d-2}=2^d(2^d-1)N^{d-2}\]-collision tuples. Dividing by $d!$ gives the first claim.

If $x=x^{-1}$, then opposite sign vectors give the same equation. In this
case we choose one sign vector from each pair $\{\mathbf a,-\mathbf a\}$,
giving $2^{d-1}$ sign patterns, and the same argument gives the stated
bound with $2^{d-1}$ in place of $2^d$.
\end{itemize}

\end{itemize}

When proving our results, it will be crucial when dealing with dependencies to have a count on the number of pairs of edges in $\Gamma_x$ that intersect.

\begin{lemma}\label{overlap}
    Let $x\in G\setminus\{1\}$, and let $1\leqslant i \leqslant d-1$. The number of ordered pairs $(e,f)$ of $d$-edges of $\Gamma_x$ such that  $|e\cap f|=i$ is at most
\[  2^{2d}{\binom di}^2 i! N^{2d-i-2}.\]
\end{lemma}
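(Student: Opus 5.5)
We count ordered pairs $(e,f)$ by encoding each pair through signed ordered representations, in the spirit of (O2). A $d$-edge $e$ of $\Gamma_x$ has $|e|=d$, and since $|e|\leqslant \ell\leqslant d$, every witnessing product is a $d$-product $h_1^{a_1}\cdots h_d^{a_d}=x$ with $h_1,\ldots,h_d$ distinct and $e=\{h_1,\ldots,h_d\}$. So it suffices to bound the number of pairs of signed ordered representations $((h_j,a_j)_{j\leqslant d},(h'_j,b_j)_{j\leqslant d})$, with all $h_j$ distinct, all $h'_j$ distinct, both products equal to $x$, and $|\{h_1,\ldots,h_d\}\cap\{h'_1,\ldots,h'_d\}|=i$. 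Each pair $(e,f)$ has at least one such representation pair, so this is an upper bound.

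First choose the signs, in $2^d\cdot 2^d=2^{2d}$ ways. Next choose the positions $I\subseteq\{1,\ldots,d\}$ in the first product at which the common elements occur ($\binom di$ ways), the positions $J$ in the second product where they occur ($\binom di$ ways), and a bijection $\sigma:I\to J$ telling which element of $I$ sits at which position of $J$ ($i!$ ways). Then choose the free elements of the first product. Fix in advance one position $p_1\in\{1,\ldots,d\}$ of the first product to be determined by the equation. We choose $h_j$ freely for $j\neq p_1$, giving at most $N^{d-1}$ choices, and $h_{p_1}$ is then forced by $h_1^{a_1}\cdots h_d^{a_d}=x$. This fixes $e$ and the common elements $h'_{\sigma(j)}=h_j$ for $j\in I$. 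In the second product, the $d-i$ positions outside $J$ remain. We choose all but one of them freely, giving at most $N^{d-i-1}$ choices (here $i\leqslant d-1$ ensures at least one free position remains), and the last one is forced by the equation $h_1'^{b_1}\cdots h_d'^{b_d}=x$, since the unknown appears once and the equation can be solved uniquely for it.

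Multiplying gives at most $2^{2d}\binom di^2 i!\,N^{d-1}N^{d-i-1}=2^{2d}\binom di^2 i!\,N^{2d-i-2}$, as claimed.

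The only point needing care is the second product: the position solved for must lie outside $J$. Otherwise the equation would become a consistency constraint on already-fixed elements and would not by itself yield a determined element. This is exactly why the hypothesis $i\leqslant d-1$ is required. No group-theoretic input is needed, because each equation is linear in a single unknown occurring once, raised to the power $\pm1$, so it has a unique solution in any group.
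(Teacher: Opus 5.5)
Your proposal is correct and is essentially the paper's own argument. In both, you take a signed ordered representation of $e$ (at most $2^dN^{d-1}$ of them, as in (O2)), choose the $i$ shared positions in $e$ together with the positions and order of the shared elements in $f$, choose the signs of $f$ and its $d-i-1$ remaining free entries, and let the last entry be forced by the defining equation; you additionally spell out details the paper leaves implicit, namely that a $d$-edge is witnessed only by $d$-products with distinct entries and that the forced position must lie outside the shared ones, which is where $i\leqslant d-1$ is needed.
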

\begin{proof}
    To see this, choose a signed ordered representation of $e$, then choose the
    $i$ shared positions in the representation of $e$, the $i$ common positions and
    their order in the representation of $f$, the signs in the representation of $f$, and finally the remaining $d-i-1$ free entries of $f$. The last
    entry of $f$ is then determined by the equation defining membership in $\Gamma_x$.
\end{proof}

When proving our results, it will also be crucial for reducing dependencies to avoid dealing with diameter-$d$ graphs $\Gamma_x$ and $\Gamma_y$ which have `many' common $d$-edges. Not only that, but in fact for every $t \leqslant r,s \leqslant d$, we would like to avoid dealing with diameter-$d$ graphs $\Gamma_x$ and $\Gamma_y$ which have `many' pairs $(e,f)$ such that $e$ is an $r$-edge of $\Gamma_x$, $f$ is an $s$-edge of $\Gamma_y$, and $e$ and $f$ intersect in $t$ elements.

To this end, for $\delta > 0$ we define the following $\delta$-dependency graph, or just dependency graph $H$:

Its vertices are the elements of $G \setminus \{1\}$, where we join $x$ to $y$ if and only if the following occurs:
\begin{itemize}
    \item[$\bullet$] For some $d \geqslant r,s \geqslant t \geqslant 1$, there are at least $N^{r+s-t-1-\delta}$ pairs $(e,f)$ where $|e|=r,|f|=s, e$ is an edge of $\Gamma_x, f$ is an edge of $\Gamma_y$, and $|e \cap f| = t$. 
\end{itemize}

We note for later use that from now on we will be working with the $\delta$-dependency graph with $0<\delta<\min\left\{\frac{\varepsilon}{4},\frac{1}{4d}\right\}$ but $N^{\delta} \geqslant 4^{d+2} d^{d+3}$. We need though to check that such a choice of $\delta$ is indeed possible. For bounded $d$, any $\delta < \varepsilon/4$ is valid as long as $N$ is large enough. For unbounded $d$, choosing $\delta = (1-\gamma)/4d_N$ works. Indeed, $0<\delta<\min\left\{\frac{\varepsilon}{4},\frac{1}{4d}\right\}$ is clearly satisfied, while
\[ \delta \log{N} = \frac{(1-\gamma)\log{N}}{4d_N} \sim \frac{d_N\log{d_N}}{1-\gamma} \geqslant (d_N+2) \log{4} + (d_N+3)\log{d_N}\]
if $N$ is large enough, satisfying the second property.

The following result says that $H$ has a `large' independent set. Later on, we will be working only within this independent set.

\begin{lemma}\label{Independent set I}
For $\delta > 0$ small enough (depending on $d$), the dependency graph $H$ has an independent set $I$ of size at least $N^{1-3\delta}$.  Furthermore, for each $x \in I$, and each $1 \leqslant k \leqslant d$, the hypergraph $\Gamma_x$ has at most $N^{k-1+\delta}$ edges of size $k$. 
\end{lemma}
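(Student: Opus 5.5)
The plan is to bound the number of edges of $H$ by a double counting argument, delete the few vertices $x$ whose $\Gamma_x$ has too many $k$-edges, and then apply Tur\'an's theorem (the greedy bound $\alpha(H)\geqslant n/(\bar d+1)$).

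\textbf{Step 1: the total weight.} Fix $1\leqslant t\leqslant r,s\leqslant d$. Let $P_{r,s,t}$ be the number of triples $(x,y,(e,f))$ with $e\in E_r(x)$, $f\in E_s(y)$ and $|e\cap f|=t$. We count these by first choosing the set $e\cup f$, which has $r+s-t$ elements, so there are at most $N^{r+s-t}$ choices. The arrangement of the sets contributes a factor depending only on $d$. Given $e$ and $f$, observation (O1) leaves at most $(2d)^{d+1}$ choices for $x$ and at most $(2d)^{d+1}$ choices for $y$. Hence
\[ P_{r,s,t}\leqslant C_d N^{r+s-t}, \qquad C_d=4^{d+1}d^{2d+2}\cdot 2^{2d} \]
or something of this form.

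\textbf{Step 2: few edges in $H$.} Every edge $xy$ of $H$ is witnessed, for some $(r,s,t)$, by at least $N^{r+s-t-1-\delta}$ such pairs. Summing over the at most $d^3$ triples $(r,s,t)$ gives
\[ |E(H)|\leqslant d^3 C_d N^{1+\delta}. \]

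\textbf{Step 3: removing heavy vertices.} Similarly, $\sum_x e_k(x)\leqslant N^k(2d)^{d+1}$, again by (O1) after choosing the $k$-set. By Markov's inequality, the number of $x$ with $e_k(x)>N^{k-1+\delta}$ is at most $(2d)^{d+1}N^{1-\delta}$. Summed over $k$, this is at most $d(2d)^{d+1}N^{1-\delta}$, which is $o(N)$ because $N^\delta\geqslant 4^{d+2}d^{d+3}$. Removing these vertices leaves at least $N/2$ vertices.

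\textbf{Step 4: the independent set.} The induced subgraph on the remaining vertices has average degree at most $4d^3C_dN^{\delta}$. Tur\'an's bound then gives an independent set of size at least
\[ \frac{N/2}{4d^3C_dN^\delta+1}. \]

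The main obstacle is the constant $C_d$, which is roughly $d^{O(d)}$. We need it absorbed into $N^{2\delta}$, which is where the hypothesis $N^\delta\geqslant 4^{d+2}d^{d+3}$ (or $\delta$ small for fixed $d$) enters. If the crude constant is too large, I would first refine the count in Step 1. Instead of choosing $e\cup f$ blindly, pick a signed ordered representation of $e$ (contributing $(2d)^{d}$), and observe that $x$ is then determined. This avoids squaring factors, so the bound becomes at most $N^{1-3\delta}$, possibly after replacing $\delta$ by a constant multiple, which the phrase "small enough" permits.
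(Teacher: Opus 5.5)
Your proof is correct, but it is organised differently from the paper's.

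\textbf{The paper's route.} It first deletes the heavy vertices. It then bounds the \emph{maximum} degree locally. For a fixed light vertex $x$ it uses $e_r(x)\leqslant N^{r-1+\delta}$ to bound the configurations $(e,f,y)$ by $\binom{r}{t}(2s)^{d+1}N^{r+s-t-1+\delta}$. Hence $x$ has at most $\binom rt(2s)^{d+1}N^{2\delta}$ neighbours for each $(r,s,t)$, which gives $\Delta(H')\leqslant N^{3\delta}/4$. The greedy bound $|V(H')|/(2\Delta(H'))$ then finishes.

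\textbf{Your route.} You double count all quadruples $(x,y,e,f)$ globally, applying (O1) to both $e$ and $f$. This bounds $|E(H)|$ by $O_d(N^{1+\delta})$ without using lightness at all. The deletion step is then needed only for the ``furthermore'' clause, and Tur\'an's (Caro--Wei) average-degree bound on the induced subgraph finishes.

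\textbf{Comparison.} Your route is more efficient by a factor $N^{\delta}$: average degree $O_d(N^{\delta})$ versus maximum degree $O_d(N^{2\delta})$. So the constant is absorbed with room to spare. Even your crude $C_d=4^{d+1}d^{2d+2}2^{2d}$ satisfies $8d^3C_d+2\leqslant N^{2\delta}$ under $N^{\delta}\geqslant 4^{d+2}d^{d+3}$, which is exactly what $(N/2)/(4d^3C_dN^{\delta}+1)\geqslant N^{1-3\delta}$ requires. If you count ordered tuples listing $e\cap f$, then $e\setminus f$, then $f\setminus e$, the arrangement factor is just $1$. The paper's route gives the stronger structural fact that every light vertex has few $H$-neighbours, but this is never used later.

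\textbf{Your last paragraph.} The fallback there is unnecessary. That is just as well, because $\delta$ is fixed both in the definition of $H$ and in the target $N^{1-3\delta}$. You could not legitimately ``replace $\delta$ by a constant multiple'' after the fact.
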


\begin{proof}
For each $1 \leqslant k \leqslant d$, there are at most $(2k)^{d+1} N^k$ $k$-edges in all of the $\Gamma_x$'s: There are at most $N^k$ ways to pick the elements of the $k$-edge, and for each choice, by (O1), there are at most $(2k)^{d+1}$ choices  of $x$ for which this is a $k$-edge of $\Gamma_x$. 

It follows that at most $(2k)^{d+1} N^{1-\delta}$ of the $\Gamma_x$'s have more than $N^{k-1+\delta}$ edges of size $k$. We remove all such vertices $x$ for every $1 \leqslant k \leqslant d$ and we are left with a subgraph $H'$ of $H$ of size at least $N/2$.

We now fix a vertex $x$ of $H'$, and $r,s,t$ satisfying $1 \leqslant t \leqslant r,s \leqslant d$. We count the pairs $(e,f)$ where $e$ is an edge of $\Gamma_x$ of size $r$, $f$ is an edge of $\Gamma_y$ of size $s$ for some $y$, and $|e \cap f| = t$:

By the definition of $H'$, there are at most $N^{r-1+\delta}$ choices for $e$. We now have $\binom{r}{t}$ ways to choose $e \cap f$ and at most $N^{s-t}$ ways to choose the remaining elements of $f$. For each such $f$, by observation (O1), there are at most $(2s)^{d+1}$ choices for $y$, such that $f$ is an edge of $\Gamma_y$. So in total there are at most $\binom{r}{t}(2s)^{d+1}N^{r+s-t-1+\delta}$ such pairs.

So there are at most $\binom{r}{t}(2s)^{d+1}N^{2\delta}$ elements $y$ for which there are at least $N^{r+s-t-1-\delta}$ pairs $(e,f)$ where $|e|=r,|f|=s, e$ is an edge of $\Gamma_x, f$ is an edge of $\Gamma_y$, and $|e \cap f| = t$. 

So by the definition of $H$, the vertex $x$ (of $H'$) has degree at most
\[ \sum_{r,s,t} \binom{r}{t}(2s)^{d+1}N^{2\delta} \leqslant \sum_{r,s} 2^r (2s)^{d+1}N^{2\delta} \leqslant d2^{d+1}(2d)^{d+1}N^{2\delta} \]
in $H$. So 
\[\Delta(H') \leqslant d2^{d+1}(2d)^{d+1}N^{2\delta} \leqslant \frac{N^{3\delta}}{4}\] 
and therefore $H'$ has an independent set $I$ of size at least
\[ \frac{|V(H')|}{2\Delta(H')} \geqslant \frac{(N/2)}{2d2^{d+1}(2d)^{d+1}N^{2\delta}} \geqslant N^{1-3\delta}\]
as required.
\end{proof}

In some special cases, we can guarantee that there are even less pairs $(e,f)$ than the ones guaranteed by the previous lemma.

\begin{lemma}\label{Independent set I - special case}
Given vertices $x,y$ of $I$ and $r,s,t$ satisfying $1 \leqslant t \leqslant r,s \leqslant d$ and $r+s-t \geqslant d+1$, then the number of pairs $(e,f)$ with $|e|=r,|f|=s, e$ is an edge of $\Gamma_x, f$ is an edge of $\Gamma_y$, and $|e \cap f| = t$ is at most $N^{r+s-t-2+2\delta}$.  
\end{lemma}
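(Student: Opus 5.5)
The plan is a direct double count. The key extra input is a pigeonhole observation on signed ordered representations: it guarantees that, for each pair $(e,f)$, some element outside $e\cap f$ occurs exactly once in a representation of $e$ or of $f$. That element is then forced by the defining equation, which saves one factor of $N$ compared with the proof of Lemma~\ref{Independent set I}. The $N^{\delta}$ already built into the edge counts for $x,y\in I$, together with a second factor $N^{\delta}$ absorbing constants, then gives $N^{r+s-t-2+2\delta}$.

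\textbf{Step 1 (pigeonhole).} First note that $t<r$ and $t<s$. If $t=s$, the hypothesis would give $r\geqslant d+1$, which is impossible since $r\leqslant d$; symmetrically $t\neq r$. Now fix a pair $(e,f)$ as in the statement, with signed ordered representations of lengths $\ell_e,\ell_f\leqslant d$. Suppose every element of $e\setminus f$ occurred at least twice in the representation of $e$, and every element of $f\setminus e$ at least twice in that of $f$. Then $\ell_e\geqslant t+2(r-t)=2r-t$ and $\ell_f\geqslant 2s-t$. Adding, $2(r+s-t)\leqslant 2d$, contradicting $r+s-t\geqslant d+1$. So every pair lies in one of two classes: class $\mathcal{A}$, where some element of $f\setminus e$ occurs exactly once in some representation of $f$; or class $\mathcal{B}$, the analogous class with the roles of $e$ and $f$ swapped. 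Since both $x,y\in I$, the situation is symmetric, so it suffices to bound $|\mathcal{A}|$.

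\textbf{Step 2 (counting $\mathcal{A}$).} By Lemma~\ref{Independent set I}, there are at most $N^{r-1+\delta}$ choices for the $r$-edge $e$ of $\Gamma_x$. Given $e$, I will build $f$ as follows.
\begin{itemize}
\item Choose $e\cap f$ in at most $\binom{r}{t}$ ways.
\item Choose a length $\ell\leqslant d$, a word of length $\ell$ in $s$ symbols, and signs; this is at most $\sum_{\ell\leqslant d}(2s)^\ell\leqslant (2d)^{d+1}$ ways.
\item Choose which symbols carry the $t$ shared elements, and which non-shared symbol is the one occurring exactly once; this is at most $s^{t+1}\leqslant d^{d+1}$ ways.
\item Choose the values of the other $s-t-1$ non-shared symbols, in at most $N^{s-t-1}$ ways. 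This is where $s-t\geqslant 1$ is used.
\end{itemize}
The equation $h_1^{a_1}\cdots h_\ell^{a_\ell}=y$ then determines the remaining symbol uniquely, because it appears exactly once. Hence
\[ |\mathcal{A}|\leqslant 2^r(2d)^{d+1}d^{d+1}N^{r+s-t-2+\delta}, \]
and the same bound holds for $|\mathcal{B}|$.

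\textbf{Step 3 (absorbing constants).} The total number of pairs is at most twice this bound, so at most $2^{d+2}(2d)^{d+1}d^{d+1}N^{r+s-t-2+\delta}$. The constant is at most $N^{\delta}$ by the standing assumption $N^{\delta}\geqslant 4^{d+2}d^{d+3}$, provided the combinatorial factors are bookkept a little more carefully than above. This gives the claimed bound $N^{r+s-t-2+2\delta}$.

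The main obstacle is Step 1: naively fixing all but one element of $f$ does not work, since that element may occur several times in the word, and then the equation need not determine it uniquely. The hypothesis $r+s-t\geqslant d+1$ is exactly what the pigeonhole argument needs to guarantee a once-occurring non-shared element on at least one side.
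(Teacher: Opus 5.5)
Your proof is correct and is essentially the paper's argument. The only structural difference is that the paper assumes without loss of generality that $r\geqslant s$, so that $2r-t\geqslant r+s-t\geqslant d+1$ already forces an element of $e\setminus f$ that occurs exactly once in every representation of $e$; it then chooses $f$ first (in at most $N^{s-1+\delta}$ ways) and solves for that element, which avoids your two-sided pigeonhole and the split into $\mathcal{A}$ and $\mathcal{B}$.

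The bookkeeping you defer in Step~3 is genuinely needed when $d$ grows: your constant $2^{d+2}(2d)^{d+1}d^{d+1}$ has logarithm about $2d\log d$, and for $d$ near $d_N$ it can exceed $N^{\delta}$. The fix is immediate. The factor $s^{t+1}$ is redundant, since you already sum over all signed words in $s$ abstract symbols, so you may label $e\cap f$ (in a fixed order) and the once-occurring element canonically. This leaves the constant $2\binom{r}{t}(2d)^{d+1}\leqslant 4^{d+1}d^{d+1}\leqslant 4^{d+2}d^{d+3}\leqslant N^{\delta}$.
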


\begin{proof}
Without loss of generality we may assume that $r \geqslant s$. 

Since $I$ is a subset of vertices of $H'$, there are at most $N^{s-1+\delta}$ choices for $f$. 

It remains to choose the remaining $r-t$ elements of $e$. Consider an $\ell$-product witnessing that $e$ is an edge, where $r\leqslant \ell\leqslant d$.
Since $r+s-t\geqslant d+1\geqslant \ell+1$, at least one element of $e\setminus f$
appears exactly once in this product. Otherwise the product would have
length at least
\[
    2(r-t)+t\geqslant r+s-t\geqslant d+1,
\]
a contradiction. Fix such an element. There are $\binom{s}{t}$ choices
for $e\cap f$, at most $r-t$ choices for the element of $e\setminus f$
which appears exactly once, $\ell$ choices for its position in the
$\ell$-product, $2^\ell$ choices for the signs, and at most
$(r-1)^{\ell-1}$ choices for which of the remaining elements of $e$
appear in the other positions. Finally, the remaining $r-t-1$ elements of
$e\setminus f$ can be chosen in at most $N^{r-t-1}$ ways, and the
uniquely occurring element is then determined by the equation defining
membership in $\Gamma_x$. Summing over $r\leqslant \ell\leqslant d$, for each fixed
$f$ there are at most $C_{d,r,s,t}N^{r-t-1}$
possible choices of $e$, where
\[C_{d,r,s,t}= \binom{s}{t}(r-t)\sum_{\ell=r}^{d}\ell\,2^\ell(r-1)^{\ell-1}\leqslant 2^d d^{d+3} \leqslant N^{\delta}.
\]
Therefore the number of such pairs $(e,f)$ is at most $N^{r+s-t-2+2\delta}$ for \(N\) sufficiently large.
\end{proof}

\section{Proofs of Theorems~\ref{T:D>=3}, \ref{T:D>=3-Abelian}, and~\ref{E:D>=3}}\label{sec:lower-bound}

Let $I$ be the independent set obtained in Lemma~\ref{Independent set I},
let $x\in I$, and let $B_x$ denote the event that $x$ is at distance
greater than $d$ from the identity. This gives 
\[
    B_x=\bigcap_{e\in \Gamma_x}\overline{A_e},
\]
where $A_e$ is the event that all elements of $e$ appear in the generating
set.

Recall that, for $1 \leqslant i \leqslant d$, we write $E_i(x)$ for the edges of $\Gamma_x$ of size $i$, and $e_i(x)=|E_i(x)|$.

By the definition of $I$ we have $e_i(x) \leqslant N^{i-1+\delta}$ for each $1 \leqslant i \leqslant d$.  By (O2), (O2a), and (O2b) we also have $e_d(x) \leqslant C(d) N^{d-1}$, where $C(d) = 2^d$ for a generic group (for Theorem~\ref{T:D>=3}), $C(d) = 2^d/d!$ if $G$ is abelian (for Theorem~\ref{T:D>=3-Abelian}), and  $C(d) = 1/d!$ if $G = \mathbb{Z}_2^n$ (for Theorem~\ref{E:D>=3}).

By Kleitman's Inequality,
\[ \Pr(B_x) \geqslant \prod_{i=1}^d (1-p^i)^{e_i(x)}\]
For each $1\leqslant i \leqslant d-1$ we have $p^i \to 0$ and
\[ p^ie_i(x) \leqslant (d!\log{N})^{i/d} N^{i/d-1+\delta}\]
So for $\delta > 0$ small enough, we have $p^ie_i(x) \to 0$ for each $1 \leqslant i \leqslant d-1$. So by Lemma~\ref{exponential asymptotic} we get $(1-p^i)^{e_i(x)} \sim 1$ for each $1 \leqslant i \leqslant d-1$.

We also have 
\[ p^{2d}e_d(x) \leqslant (d!\log{N})^2 C(d)N^{1-d} \to 0\]
so again by Lemma~\ref{exponential asymptotic} we get
\[ (1-p^d)^{e_d(x)} \sim \exp\{-p^de_d(x)\} \geqslant \exp\{-(1-\varepsilon)\log{N}\} = N^{-1+\varepsilon}. \]
This is because for all three cases the value of $C(d)$ leads to $p^de_d(x) \leqslant (1-\varepsilon)\log{N}$. 

We therefore get
\[\Pr(B_x) \geqslant (1+o(1))N^{-1+\varepsilon}. \]
Let $X$ be the number of vertices of $I$ which are at distance greater than $d$ from the identity. Then
\[ \mathbb{E}X = \sum_{x \in I} \Pr(B_x) \geqslant (1+o(1)) N^{1-3\delta} \cdot N^{-1+\varepsilon}\to +\infty\]
since $3\delta <\varepsilon$. 

Given distinct $x,y\in I$, for each $1 \leqslant i \leqslant d$, we write $e_i(x\vee y)$ for the number of edges of $\Gamma_x \cup \Gamma_y$ of size $i$ and $e_i(x\wedge y)$ for the number of common edges of $\Gamma_x$ and $\Gamma_y$ of size $i$. So
\[ e_i(x) + e_i(y) = e_i(x \vee y) + e_i(x \wedge y)\] 
for each $i$. 

For $x\in I$, let $\mathcal D_x=E_d(x)$ be the set of $d$-edges of
$\Gamma_x$, and define
\[
    D_x=\bigcap_{e\in\mathcal D_x}\overline{A_e}.
\]
Since $B_x$ requires avoiding all edges of $\Gamma_x$, while $D_x$
requires avoiding only the $d$-edges, we have $B_x\subseteq D_x$. Thus, in
order to upper-bound $\Pr(B_x\cap B_y)$, it is enough to consider only
$d$-edges.
Thus, for distinct $x,y\in I$,
\[
    \Pr(B_x\cap B_y)\leqslant \Pr(D_x\cap D_y).
\]

For distinct $x,y\in I$, we apply Janson's inequality to the family
$\mathcal D_x\cup\mathcal D_y$. We have
\[
    |\mathcal D_x\cup\mathcal D_y|
    =
    e_d(x)+e_d(y)-e_d(x\wedge y).
\]
Hence
\[ \Pr(D_x\cap D_y) \leqslant \exp\left\{-p^d(e_d(x)+e_d(y)-e_d(x\wedge y)) +\frac{1}{2}\Delta_{x,y}\right\},\]
where
\[\Delta_{x,y}  =\sum_{\substack{e,f\in\mathcal D_x\cup\mathcal D_y\\e\neq f,\ e\cap f\neq\emptyset}}\Pr(A_e\cap A_f).\]

We now split $\Delta_{x,y}$ into internal and cross contributions:
\[ \Delta_{x,y} = \Delta_x^{\mathrm{int}} +  \Delta_y^{\mathrm{int}} + \Delta_{x,y}^{\mathrm{cross}}.\]

By Lemma~\ref{overlap}, the internal contributions are at most
\begin{align*}
\Delta_x^{\mathrm{int}}+\Delta_y^{\mathrm{int}} &\leqslant 2\sum_{i=1}^{d-1} 2^{2d}\binom di^2 i!N^{2d-i-2}p^{2d-i}  \\    
&\leqslant 2 \cdot 4^d \cdot d!^2 \cdot dN^{2d-3}p^{2d-1}.
\end{align*}
We claim that this is $o(1)$. To check this, it is enough, after taking logarithms, to check that
\[(2d-1)\log{(pN)} + 2\log{d!} - 2\log{N} \to -\infty. \]
Since $(pN)^d \leqslant d!N\log{N}$ and $d! \leqslant d^d$, it is enough to check that
\[ 4d_N\log{d_N} + \left(2 - \frac{1}{d_N}\right)(\log{N}+\log{\log{N}})-2\log{N} \to -\infty.\]
This is true since $4d_N^2\log{d_N} \sim (1-\gamma)^2\log{N}$ and $d_N \log{\log{N}} = o(\log{N})$.

For the cross contribution, suppose $e\in\mathcal D_x$, $f\in\mathcal D_y$, and $|e\cap f|=i$. Since $e\neq f$, then $1\leqslant i\leqslant d-1$, so $|e\cup f|=2d-i\geqslant d+1$. Since $x,y\in I$ by Lemma~\ref{Independent set I - special case} we have at most $N^{2d-i-2+2\delta}$ such cross-pairs. Therefore
\[\Delta_{x,y}^{\mathrm{cross}} \leqslant \sum_{i=1}^{d-1}  N^{2d-i-2+2\delta}p^{2d-i} = N^{2d-2+2\delta}p^{2d}\sum_{i=1}^{d-1} (pN)^{-i} \leqslant N^{2d-2+2\delta}p^{2d} \cdot \frac{2}{pN}
\]
where we used the fact that $pN \to \infty$. We claim that this is $o(1)$. To check this, it is enough, after taking logarithms, to check that 
\[(2d-1)\log{(pN)} + (2\delta-2)\log{N} \to -\infty. \]
Since $(pN)^d \leqslant d!N\log{N}$, then
\begin{align*}
(2d-1)\log{(pN)} + (2\delta-2)\log{N} &\leqslant \left(2 - \frac{1}{d_N} \right)\left(\log{d_N!} + \log{\log{N}}  \right) +  \left(2\delta-\frac{1}{d_N}\right)\log{N}\\
&\leqslant (2d_N-1)\log{d_N} +2\log{\log{N}} - \frac{\log{N}}{2d_N} \to -\infty
\end{align*}
since $4d_N^2\log{d_N} \sim (1-\gamma)^2\log{N}$ and $d_N \log{\log{N}} = o(\log{N})$.

Thus $\Delta_{x,y}=o(1)$, and so
\[\Pr(D_x\cap D_y)\leqslant    (1+o(1))  \exp\left\{ -p^d(e_d(x)+e_d(y)-e_d(x\wedge y))\right\}.\]
Since $x$ and $y$ are non-adjacent in $H$, applying the definition of $H$ with $r=s=t=d$ gives $e_d(x\wedge y)\leqslant N^{d-1-\delta}$. Therefore
\[
    p^d e_d(x\wedge y)
    \leqslant  (d!\log N)N^{-\delta}   =    o(1).
\]

From the lower bound on $\Pr(B_x)$ obtained above, we have
\[
    \Pr(B_x)  \geqslant    (1+o(1))\exp\{-p^de_d(x)\},
\]
and similarly
\[
    \Pr(B_y)     \geqslant     (1+o(1))\exp\{-p^de_d(y)\}.
\]
Hence
\[
    \Pr(D_x\cap D_y)     \leqslant     (1+o(1))\Pr(B_x)\Pr(B_y).
\]
Since $B_x\cap B_y\subseteq D_x\cap D_y$, we conclude that
\[
    \Pr(B_x\cap B_y)   \leqslant   (1+o(1))\Pr(B_x)\Pr(B_y).
\]

The reverse inequality
\[ \Pr(B_x \cap B_y) \geqslant \Pr(B_x)\Pr(B_y)\]
is immediate by Kleitman.
Thus
\begin{align*}
\mathbb{E}X(X-1) &= \sum_{x \in I}\sum_{y \in I \setminus\{x\}} \Pr(B_x \cap B_y) \\
&\leqslant   \sum_{x \in I}\sum_{y \in I \setminus\{x\}} (1+o(1))\Pr(B_x)\Pr(B_y) \\
&= (1+o(1))(\mathbb{E}X)^2.
\end{align*} 
So by Chebyshev's Inequality,
\[ \Pr(X=0) \leqslant \frac{\Var(X)}{(\mathbb{E}X)^2} = \frac{\mathbb{E}X(X-1) + \mathbb{E}X - (\mathbb{E}X)^2}{(\mathbb{E}X)^2} = o(1) + \frac{1}{\mathbb{E}X} = o(1). \]
So with high probability $X > 0$ and so the diameter of $\Gamma$ is greater than $d$.

\section{Proofs of Theorems~\ref{T:<=D} and \ref{E:D>=3-Abelian}}

Given an edge $e$ of $\Gamma_x$, write $A_e$ for the event that all vertices of the edge $e$ appear in the generating set. Then 
\[ \Pr(d(1,x) > d) = \Pr\left(\bigcap_{e \in E(\Gamma_x)} \overline{A_e} \right) \leqslant  \Pr\left(\bigcap_{e \in E_d(x)} \overline{A_e} \right) \]

By (O3) we have $e_d(x) \geqslant (1+o(1))N^{d-1}/d!$. For the more specific case of Theorem~\ref{E:D>=3-Abelian}, we have $e_d(x) \geqslant (1+o(1))2^d N^{d-1}/d!$, if $x\ne x^{-1}$, and half that for the unique non-identity involution in a cyclic group of even order. So, except for the possible unique cyclic involution, we have 
\[\sum_{e \in E_d(x)} \Pr(A_e)=p^de_d(x)
\geqslant (1+o(1))(1+\varepsilon)\log N,\]
while if $x$ is the exceptional involution we have
\[ \sum_{e \in E_d(x)} \Pr(A_e)=p^de_d(x)
\geqslant (1+o(1))\frac{1+\varepsilon}{2}\log N.\]

Suppose now that the events $A_e$ and $A_f$ (with $e\neq f$) are not independent. This happens when $e \cap f \neq \emptyset$. Using Lemma~\ref{overlap}, we have
\[\sum_{e\cap f\ne\emptyset}\Pr(A_e\cap A_f)\leqslant \sum_{i=1}^{d-1} 2^{2d}\binom di^2 i! N^{2d-i-2}p^{2d-i} = o(1).
\]
This follows in exactly the same way as in the bounding of the internal contributions.


So by Janson, for any $x$ which is not the exceptional involution,
\[ \Pr(d(1,x) > d) \leqslant \exp\left\{-\sum_{e \in E_d(x)} \Pr(A_e) + \sum_{e \cap f \neq \emptyset} \Pr(A_e \cap A_f)\right\} \leqslant N^{-(1+\varepsilon/2)}\]
where the second sum is over all sets $\{e,f\}$ of distinct edges $e,f$  of $\Gamma_x$.

Letting $X$ be the number of vertices of $G \setminus \{1\}$ which are at distance greater than $d$ from the identity, we get 
\[\mathbb{E}X \leqslant N \cdot N^{-(1+\varepsilon/2)} +  N^{-(1+\varepsilon)/2} = o(1)\] 
and therefore by Markov's Inequality $X=0$ with high probability. So the diameter of $\Gamma$ is with high probability at most $d$.

\section{Proof of Theorem~\ref{E:D=d}}

Just like in the proof of the previous section, letting $X$ be the number of vertices of $G \setminus \{1\}$ which are at distance greater than $d$ from the identity, it would be enough to show that $\mathbb{E}X \to 0$. For this, it would be enough to prove that
\[ \sum_{e \in E_d(x)} \Pr(A_e) \geqslant (1+o(1))(1+\varepsilon)\log{N}, \]
for every $x \neq 1$. However, this is actually not true for vertices $x$ which are involutions, or which have `small' conjugacy class sizes. We will therefore find lower sums for $\sum_{e \in E_d(x)} \Pr(A_e)$ depending on whether $x$ is an involution and whether it has a `large' conjugacy class size or not. The following lemma will enable us to compute lower bounds of such sums.

\begin{lemma} Let $G$ be a group satisfying the conditions of Theorem~\ref{E:D=d}. Then
\begin{itemize}
    \item[(a)] If $x^2 \neq 1$ and $\cl(x) > M$, then there are at least $(1-\varepsilon/4)N^{d-1}$ tuples $(g_1,\ldots,g_{d-1})$ of distinct elements of $G \setminus \{1,x\}$ such that all $d!2^d$ products of the form $h_1^{a_1} \cdots h_d^{a_d}$, where $h_1,\ldots,h_d$ is a permutation of $x,g_1,\ldots,g_{d-1}$ and $a_i \in \{-1,1\}$ for each $1 \leqslant i \leqslant d$, are distinct.  
    \item[(b)] If $x^2 = 1$ and $\cl(x) > M$, then there are at least $(1-\varepsilon/4)N^{d-1}$ tuples $(g_1,\ldots,g_{d-1})$ of distinct elements of $G \setminus \{1,x\}$ such that all $d!2^{d-1}$ products of the form $h_1^{a_1} \cdots h_d^{a_d}$, where $h_1,\ldots,h_d$ is a permutation of $x,g_1,\ldots,g_{d-1}$, and $ a_i \in \{-1,1\}$ for each $1 \leqslant i \leqslant d$, and furthermore, if $x=h_i$, then $a_i=1$, are distinct. 
    \item[(c)] If $x^2 \neq 1$ and $\cl(x) \leqslant M$, then there are at least $(1-\varepsilon/4)N^{d-1}$ tuples $(g_1,\ldots,g_{d-1})$ of distinct elements of $G \setminus \{1,x\}$ such that all $(d-1)!2^d$ products of the form $h_1^{a_1} \cdots h_d^{a_d}$, where $h_1,\ldots,h_{d-1}$ is a permutation of $g_1,\ldots,g_{d-1}$, $x=h_d$, and $a_i \in \{-1,1\}$ for each $1 \leqslant i \leqslant d$, are distinct.  
    \item[(d)] If $x^2 = 1$ and $\cl(x) \leqslant M$, then there are at least $(1-\varepsilon/4)N^{d-1}$ tuples $(g_1,\ldots,g_{d-1})$ of distinct elements of $G \setminus \{1,x\}$ such that all $(d-1)!2^{d-1}$ products of the form $h_1^{a_1} \cdots h_d^{a_d}$, where $h_1,\ldots,h_{d-1}$ is a permutation of $g_1,\ldots,g_{d-1}$, $x=h_d$,  $a_i \in \{-1,1\}$ for each $1 \leqslant i \leqslant d-1$, and $a_d=1$, are distinct.  
\end{itemize}    
\end{lemma}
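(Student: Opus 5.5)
We will prove all four parts by a union bound over ``bad'' tuples. Start from all $(N-2)(N-3)\cdots(N-d)=(1-o(1))N^{d-1}$ tuples of distinct elements of $G\setminus\{1,x\}$. For every pair of distinct admissible formal products $P=h_1^{a_1}\cdots h_d^{a_d}$ and $Q=k_1^{b_1}\cdots k_d^{b_d}$ (both words in the letters $x,g_1,\ldots,g_{d-1}$), we will bound the number of tuples for which $P=Q$ in $G$. The number of such pairs is at most $(d!2^d)^2\leqslant N^{\delta}$ in the range $d\leqslant d_N$. It therefore suffices to show that each fixed coincidence $P=Q$ holds for at most $N^{d-1-2\delta'}$ tuples, for some small $\delta'>\delta$. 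Then the total number of bad tuples is $o(N^{d-1})$, which gives at least $(1-\varepsilon/4)N^{d-1}$ good tuples.

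To bound a single coincidence, rewrite $P=Q$ as $PQ^{-1}=1$. This is a word $w$ in $x^{\pm1}$ and the $g_j^{\pm1}$. Pick a letter $g_j$ that appears in $P$ and $Q$ either at different positions or with different signs or different neighbours; such a $g_j$ exists unless the two formal words coincide. We fix all other $g_i$ (at most $N^{d-2}$ choices) and count the solutions $g=g_j$ of the resulting equation. Each of $P$ and $Q$ contains $g_j$ exactly once, so the equation has one of the forms
\[ ug^{a}v=u'g^{b}v'. \]
We then treat three cases.

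\begin{itemize}
\item[(i)] If $a=b$ and the equation reduces to $g^{a}=cg^{a}c'$ with $c\neq 1$ or $c'\neq1$, it becomes a conjugation equation $g^{-a}c^{-1}g^{a}=c'$. This has at most $N/\cl(c^{-1})$ solutions by Theorem~\ref{commuting_pairs}.
\item[(ii)] If $a=-b$, the equation becomes $g^{a}cg^{a}=c'$, that is, $(g^{a}c)^2=c'c$. By Theorem~\ref{main-rep} combined with Lemma~\ref{cl(G)-bound} and hypothesis (b), this has at most $\sqrt{N(N^{1/d}+N/M)}$ solutions. That count is $N^{1-\Omega(1/d)}$ or $N/\sqrt M$, and both are $\leqslant N^{1-2\delta'}$ when $M=\exp\{2\sqrt{\log N\log\log N}\}$ and $d\leqslant d_N$.
\item[(iii)] If $g_j$ can be cancelled so that the equation does not involve $g_j$ at all, we instead choose a different free letter, in the same spirit as the proof of Lemma~\ref{Independent set I - special case}.
\end{itemize}

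The main difficulty is case (i), when the element $c$ being conjugated has small conjugacy class. The element $c$ is itself a word in the fixed $g_i$ and $x$. The plan is to choose the free variable so that $c$ still depends nontrivially on some other free $g_i$, and to average over that variable. By hypothesis (b), $c$ lands in the set of at most $N^{1/d}$ elements with $\cl\leqslant M$ for only an $N^{-1+1/d}$ fraction of the choices. Otherwise $\cl(c)>M$, and there are at most $N/M$ solutions. Both bounds give $o(N^{d-1}/N^{\delta})$.

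The only case in which this averaging fails is when $c$ is a word in $x$ alone, for example $c=x$ or $c=x^2$. This is exactly where the four parts differ.

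\begin{itemize}
\item In parts (a) and (b), $\cl(x)>M$ handles $c=x^{\pm1}$ directly.
\item In parts (c) and (d), we forbid permuting $x$ and fix $x=h_d$. Products in which $x$ sits at the same (last) position then never force conjugation of $x$ itself. The remaining coincidences between $x$-adjacent words reduce to equations of the form $g^{-1}xg=x^{-1}$ or $g$ centralizing $x$. These are avoided because, whenever $x$ moves, the coincidence involves another $g_i$ that can be averaged out.
\item Involutions force the sign restrictions in (b) and (d). When $x=x^{-1}$, flipping its sign gives an identical word, which is why we count $2^{d-1}$ sign patterns. For involutions $c$, case (ii) uses hypothesis (c) to control square roots of $c'c$ when $c'c$ has small class. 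Hypothesis (a) bounds the number of $g$ with $(g^ac)^2=1$, namely at most $N^{(2+\varepsilon)/4}$, which is $o(N^{1-2\delta'})$.
\end{itemize}

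Checking that every pair of distinct admissible words falls into one of these cases, which is a combinatorial case analysis on the first position where $P$ and $Q$ differ, is the step we expect to be the most tedious.
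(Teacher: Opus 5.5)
Your outline has the paper's architecture: a union bound over the at most $(d!2^d)^2$ ordered pairs of admissible products, with one freed letter per pair. Square-type coincidences are bounded by Theorem~\ref{main-rep} with Lemma~\ref{cl(G)-bound}, giving at most $\sqrt{2}N/\sqrt{M}$ solutions whatever the right-hand side. Conjugation-type coincidences are bounded by Theorem~\ref{commuting_pairs}, using either $\cl(x)>M$ or averaging with hypothesis (b).

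\textbf{The gap.} The step you defer is where the proof lives, and your rule for it does not work. The averaging bound $N^{-1+1/d}$ needs $c$ to be equidistributed as the averaged letter varies, as happens when that letter occurs exactly once in $c$. ``Depends nontrivially'' does not give this. Under your selection rule (any displaced $g_j$, or one with a different sign or different neighbours), $c=u^{-1}u'$ mixes prefixes of both words. For instance, in part (a) take $P=g_2xg_1g_3$ and $Q=xg_2g_1g_3^{-1}$, and free $g_1$ (same place and sign, different neighbours). Then $c=x^{-1}g_2^{-1}xg_2$ and $c'=g_3^{-2}$. As $g_2$ varies, $c$ only runs over the $\cl(x)$ elements of $x^{-1}\Cl(x)$, each $N/\cl(x)$ times. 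So hypothesis (b) bounds the bad fraction only by $N^{1/d}/\cl(x)$, which is vacuous when $M<\cl(x)\leqslant N^{1/d}$. Hence your claim that averaging fails only when $c$ is a word in $x$ alone is false for your rule. Your case (iii) also has no exit when every $g$ cancels.

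\textbf{The missing idea} is the paper's choice of the freed letter.
\begin{itemize}
\item Delete the longest common prefix of $P$ and $Q$ (same letters, same signs).
\item If the next letter $f$ of $Q$ is a $g$, free it. It occurs in what remains of $P$ at the same or a later position. So $u'=1$, and $u$ is the contiguous block $y$ of $P$ in front of $f$, a product of distinct letters each occurring once.
\item Opposite signs give a square equation $(f^{a}y)^2=z$. Equal signs give a conjugate of $y$. Then $y$ is either $x^{\pm1}$ (use $\cl(x)>M$), or it contains some $g$ exactly once and is therefore equidistributed.
\item If the next letter of $Q$ is $x$, do the same from the right end. That scan can reach $x$ only if what remains is $x^{a}=x^{-a}$, i.e.\ $x^2=1$. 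This is impossible in (a) and (c), and excluded by the sign convention in (b) and (d).
\end{itemize}
In (c) and (d) the left scan always frees a $g$ and $y$ contains only $g$'s. That is the actual reason $\cl(x)$ never enters there; your ``whenever $x$ moves'' is vacuous, since $x$ never moves.

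\textbf{Two smaller points.}
\begin{itemize}
\item Hypotheses (a) and (c) of Theorem~\ref{E:D=d} play no role in this lemma, because Theorem~\ref{main-rep} bounds the square roots of every element, including $1$. They are used only afterwards, when summing over the four types of $x$.
\item In your bookkeeping the condition actually needed is $(d!2^d)^2=o(\sqrt{M})$, i.e.\ $2\delta'>\delta$. Requiring $\delta'>\delta$ amounts to $(d!2^d)^4=O(\sqrt{M})$, which fails near $d=d_N$ when $\gamma$ is small.
\end{itemize}
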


\begin{proof} \mbox{}
 \begin{itemize}
 \item[(a)] Suppose that two such different choices of permutations and exponents give the same product, say
\[ h_1^{a_1} \cdots h_d^{a_d} = f_1^{b_1} \cdots f_d^{b_d}\]
Suppose first that $f_1 \neq x,h_1$ and pick $i$ such that $f_1 = h_i$. If $b_1 = -a_i$, then we can rewrite this equality as $(h_i^{a_i}y)^2 = z$ where
\[ y = h_1^{a_1} \cdots h_{i-1}^{a_{i-1}} \quad \text{and} \quad z = f_2^{b_2} \cdots f_d^{b_d}h_d^{-a_d} \cdots h_{i+1}^{-a_{i+1}}y.\]
We now count how many times this situation can occur. Picking the values of all $g$'s apart from $f_1 = h_i$, then $y$ and $z$ are completely determined. By Theorem~\ref{main-rep} the equation has at most $\sqrt{N\cl(G)}$ solutions for $h_i^{a_i}y$ and therefore at most that many for $h_i$. Since $M \leqslant N^{1/2} \leqslant N^{1-\frac{1}{d}}$, then by
Lemma~\ref{cl(G)-bound} there are at most 
 \[\sqrt{N\left(N^{1/d}+\frac{N}{M}\right)} \leqslant 
 \sqrt{\frac{2N^2}{M}} < \frac{2N}{\sqrt{M}}\]
such solutions. So there are at most 
\[\frac{2(d!2^d)^2 N^{d-1}}{\sqrt{M}}\]
situations for which this occurs where the extra factor is to account for all possibilities of forming the products.

Suppose now that $f_1 \neq x,h_1$ but $b_1 = a_i$. Then we can rewrite this equality as $h_i^{-a_i}yh_i^{a_i} = z$ where
\[ y = h_1^{a_1} \cdots h_{i-1}^{a_{i-1}} \quad \text{and} \quad z =f_2^{b_2} \cdots f_d^{b_d}h_d^{-a_d} \cdots h_{i+1}^{-a_{i+1}}. \]
We now count how many times this situation can occur. Picking the values of all $g$'s apart from $f_1 = h_i$, then $y$ and $z$ are completely determined. If $i=2$ and $h_1 = x$ then $y = x$ or $y=x^{-1}$ and since $\cl(x) \geqslant M$, by Theorem~\ref{commuting_pairs} at most $N/M$ choices of the value of $h_i$ lead to equality. Otherwise, at least one of the $h$'s in the product defining $y$ is one of the $g$'s and since the $h_1,\ldots,h_{i-1}$ are all distinct, then, while varying the choices of the $g$'s (apart from $f_1=h_i$), even allowing some of them to be equal, all elements of $G$ appear exactly the same number of times as $y$, namely $N^{d-3}$ times each. So at most $N^{d-2}/M$ times do we get a $y$ with $\cl(y) \leqslant M$ and therefore, accounting for the choice of the value of $f_1=h_i$, we have at most $N^{d-1}/M$ situations where this occurs. In the at most $N^{d-2}$ times where $\cl(y) > M$, just like in the case where $y = x,x^{-1}$, at most $N/M$ choices of the value of $h_i$ lead to equality. In total, in this case, the equality situation occurs at most $2N^{d-1}/M$ times.

Suppose now that $f_1=x$ or $f_1 = h_1$. If $f_1 = h_1$ and $b_1 = -a_1$ then we get an equation of the form $(h_1^{a_1})^2 = y$ which can be treated as in the previous paragraphs. If $f_1 = h_1$ and $b_1=a_1$ we `cancel' them, while still allowing them to take all possible $N$ values. We repeat the argument with $f_2$ and $h_2$ and so on. Each time, as long as we do not first reach an $f_i$ equal to $x$, this situation can occur the same number of times as the count in the previous paragraphs.

Suppose now that $f_i = x$, after we have cancelled $f_1,\ldots,f_{i-1}$ (including the possibility that $i=1$). In this case we start working backwards from $f_d$ in the same manner. The only possible problem is if we have to cancel $f_{d},f_{d-1},\ldots,f_{i+1}$ but this cannot occur as it would mean that the products were identical from the start without the need to choose the values of the $g$'s. 

So overall, taking into account that there are at most $d^2$ choices on how many $f$'s we cancelled from the left and from the right, we have at most $2d^2 (d!2^d)^2N^{d-1}/\sqrt{M}$ situations where equality occurs. So we have at least 
\[ (N-1)(N-2) \cdots (N-(d-1)) - \frac{2d^2 (d!2^d)^2 N^{d-1}}{\sqrt{M}} \]
tuples $(g_1,\ldots,g_{d-1})$ giving distinct products. Note that for $x\in [0,1/2]$ we have
$(1-x)(1+2x) = 1+x-2x^2 \geqslant 1$ and therefore
\[ 1-x \geqslant \frac{1}{1+2x} \geqslant \frac{1}{e^{2x}}.\]
So since $d^2 \leqslant d_N^2 = o(N)$ we get
\begin{align*} 
(N-1)(N-2) \cdots (N-(d-1)) &= \left(1 - \frac{1}{N}\right) \cdots \left(1 - \frac{d-1}{N}\right)N^{d-1} \\
&\geqslant \exp\left\{-\frac{2(1 + 2 + \cdots + (d-1))}{N} \right\}N^{d-1} \\
&= \exp\left\{-\frac{d(d-1)}{N} \right\}N^{d-1} = (1+o(1))N^{d-1}.
\end{align*}
For fixed $\varepsilon>0$ and $N$ large enough, we also have
\[ \frac{16d^2(d!2^d)^2}{\varepsilon} < (d_N!)^3 < d_N^{3d_N}\]
therefore
\[ \log{\left(\frac{16d^2(d!2^d)^2}{\varepsilon}\right)} < \frac{5d_N \log{d_N}}{2} < \frac{5}{2}\sqrt{\frac{\log{N}}{2\log{\log{N}}}} \cdot \frac{\log{\log{N}}}{2} < \frac{\log{M}}{2}.\]
It follows that 
\[\frac{2d^2 (d!2^d)^2 N^{d-1}}{\sqrt{M}} \leqslant \frac{\varepsilon N^{d-1}}{8} \]

\item[(b)] This is similar to (a) since restricting the exponent of $x$ to always be equal to $1$ guarantees that the situation $x^2=z$ never occurs. (The problem here would only occur if cancellations would force $z$ to be equal to $1$.)
\item[(c)] Similar to (a). Note that the situation $h_i^{-a_i}yh_i^{a_i} = z$ with $y=x$ never arises.
\item[(d)] Similar to (a). Note that the situation $h_i^{-a_i}yh_i^{a_i} = z$ with $y=x$ never arises. Nor does the situation $x^2=z$. \qedhere  
\end{itemize}   
\end{proof}

The above Lemma guarantees that if $x^2 \neq 1$ and $\cl(x) > M$, then $e_d(x) \geqslant 2^d(1-\varepsilon/4)N^{d-1}$. Indeed, for every tuple $(g_1,\ldots,g_{d-1})$ given in part (a) of the Lemma, and every one of the $d!2^d$ products $h_1^{a_1} \cdots h_d^{a_d}$, we define $g_d$ to be the value of this product. Solving for $x$, we can write $x$ as a $d$-product of the set $\{g_1,\ldots,g_d\}$. The fact that all products are distinct guarantees that we obtain at least $2^d(1-\varepsilon/4)N^{d-1}$ $d$-edges $\{g_1,\ldots,g_d\}$ of $\Gamma_x$. Here, we divided by $d!$ to move from tuples to sets. We note that when an element such as $g_d$ is uniquely determined by the remaining variables, it might happen that $g_d=1$ or $g_d = g_i$ for some $i<d$. However, each such condition imposes an additional relation on the remaining $d-1$ free variables, reducing the number of such configurations to at most $O(N^{d-2})$. These invalid configurations are therefore absorbed in the error term and do not affect the main estimate.
 
Similarly, the lemma guarantees that $e_d(x) \geqslant 2^{d-1}(1-\varepsilon/4)N^{d-1}$ if $x^2 = 1$ and $\cl(x) > M$, $e_d(x) \geqslant \frac{2^d}{d}(1-\varepsilon/4)N^{d-1}$ if $x^2 \neq 1$ and $\cl(x) \leqslant M$, and $e_d(x) \geqslant \frac{2^{d-1}}{d}(1-\varepsilon/4)N^{d-1}$ if $x^2 = 1$ and $\cl(x) \leqslant M$.

Thus
\[ \sum_{e \in E_d(x)} \Pr(A_e) = p^de_d(x) \geqslant \begin{cases}
    (1-\varepsilon/4)(1+\varepsilon)\log{N} & \text{if $x^2 \neq 1$ and $\cl(x) > M$} \\
   \frac{1}{2}(1-\varepsilon/4)(1+\varepsilon)\log{N} & \text{if $x^2 = 1$ and $\cl(x) > M$} \\
    \frac{1}{d}(1-\varepsilon/4)(1+\varepsilon)\log{N} & \text{if $x^2 \neq 1$ and $\cl(x) \leqslant M$}\\
    \frac{1}{2d}(1-\varepsilon/4)(1+\varepsilon)\log{N} & \text{if $x^2 = 1$ and $\cl(x) \leqslant M$}
\end{cases} \]

The number of elements $x$ of each type is enough to guarantee that $\mathbb{E}X = o(1)$ and therefore we can conclude as in the previous section.

\end{document}